\newtheorem{thm}{Theorem}[section]
\newtheorem{prop}[thm]{Proposition}
\newtheorem{defn}[thm]{Definition}
\theoremstyle{definition}
\newtheorem{cond}[thm]{Condition}
\newtheorem{rmk}[thm]{Remark}
\theoremstyle{remark}
\newcommand{\TextNorm}[1]{\textrm{\textmd{\textup{#1}}}}
\newcommand{\hsforall}{\hspace{1mm}\forall\hspace{1mm}}						 
\newcommand{\res}{\operatorname*{Res}}                             
\renewcommand{\Re}{\operatorname*{Re}}                             
\renewcommand{\Im}{\operatorname*{Im}}                             
\renewcommand{\d}{\ensuremath{\,\mathrm{d}}}							         
\newcommand{\T}{\text{\TextNorm{T}}\hspace{0.1mm}}							   
\newcommand{\DeltaP}{\Delta_{\mathrm{PDE}}\hspace{0.5mm}}          
\newcommand{\DeltaPsup}[1]{\Delta_{\mathrm{PDE}}^{#1}\hspace{0.5mm}}
\newcommand{\Mspacer}{\hspace{0.5mm}}                              
\newcommand{\M}[3]{#1_{#2\Mspacer#3}}                              
\newcommand{\Msup}[4]{#1_{#2\Mspacer#3}^{#4}}                      
\newcommand{\Msups}[5]{#1_{#2\Mspacer#3}^{#4\Mspacer#5}}           
\renewcommand{\geq}{\geqslant}                                     
\newcommand{\BE}{\begin{equation}}                                 
\newcommand{\EE}{\end{equation}}                                   
\newcommand{\BES}{\begin{equation*}}                               
\newcommand{\EES}{\end{equation*}}                                 
\newcommand{\BP}{\begin{pmatrix}}                                  
\newcommand{\EP}{\end{pmatrix}}                                    
\newcommand{\N}{\mathbb{N}}                                        
\newcommand{\R}{\mathbb{R}}                                        
\newcommand{\C}{\mathbb{C}}                                        
\newcommand{\superscript}[1]{\ensuremath{^{\textrm{#1}}}}
\newcommand{\Thns}[0]{\superscript{th}}
\newcommand{\Th}[0]{\Thns~}
\newcommand{\rdns}[0]{\superscript{rd}}
\newcommand{\rd}[0]{\rdns~}
\def\clap#1{\hbox to 0pt{\hss#1\hss}}
\numberwithin{equation}{section}
\begin{document}

\title{Spectral theory of some non-selfadjoint \\ linear differential operators}
\author{B. Pelloni$^1$ and D. A. Smith$^2$ \\
\footnotesize $^1$ Department of Mathematics, University of Reading RG\textup{6} \textup{6}AX, UK \\
\footnotesize $^2$ \emph{Corresponding author}, ACMAC, University of Crete, Heraklion 71003, Crete, Greece \\
\footnotesize email\textup{: \texttt{d.a.smith@acmac.uoc.gr}}
}
\date{\today}

\maketitle

\abstract{
We give a characterisation of the spectral properties of linear differential operators with constant coefficients, acting on functions defined on a bounded interval, and determined  by general linear  boundary conditions. The boundary conditions may be such that the resulting operator is not selfadjoint.

We associate the spectral properties of such an operator $S$ with the properties of the solution of a corresponding boundary value problem for the partial differential equation $\partial_t q \pm iSq=0$. Namely, we are able to establish an explicit correspondence between the properties of the family of eigenfunctions of the operator, and in particular whether this family is a basis, and the existence and properties of the unique solution of the associated boundary value problem. When such a unique solution exists, we consider its representation as a complex contour integral that is obtained using a transform method recently proposed by Fokas and one of the authors. The analyticity properties of the integrand in this representation are crucial for studying the spectral theory of the associated operator.
}

\bigskip
MSC: 47A70, 47E05, 35G16, 45P10, 35C10

\section{Introduction}

In this paper, we study the following two objects: 
\begin{itemize}
\item[(1)]
A linear constant-coefficient differential operator $S$ defined on a domain of the form
$\mathcal{D}(S)=\{u\in L^2[0,1]: u$ sufficiently smooth and satisfying $n$ prescribed boundary conditions$\}$.
\item[(2)]
An initial boundary value problem (IBVP) for the linear evolution partial differential equation $q_t(x,t)\pm iSq(x,t)=0$, $x\in(0,1)$ $t\in(0,T)$, with $S$ as in (1), an initial condition $q(x,0)=q_0(x)$ and $n$ given boundary conditions. 
\end{itemize}
The boundary conditions, assumed to be linear, can be prescribed at either end of the interval $[0,1]$, or can couple the two ends.

It is to be expected that the objects (1) and (2) are closely related. 
For each of these objects, it is natural to formulate a basic question, whose answer depends on the specific boundary conditions. Namely, given a set of $n$ boundary conditions,
\begin{itemize}
\item[(Q1)]
does the resulting operator $S$ admit a basis of eigenfunctions, in any appropriate sense?
\item[(Q2)]
does the resulting initial-boundary value problem admit a unique solution representable by a discrete series expansion in the eigenfunctions of $S$?
\end{itemize}

Although it should be clear that these are the same question posed in different contexts, very little is explicitly known beyond the classical cases when the spatial operator has a known basis of eigenfunctions. This basis can be used after separation of variables to express the solution of the boundary value problem. 

\smallskip

In this paper we give an explicit connection between the two problems in general; we give a link between the solutions of (1) and (2), and we show precisely how the answer to (Q1) and (Q2) are related. In particular, the rigorous answer to one question can be given through answering the other. Our results are true for general $n$, however they are new and interesting in particular for $n$ odd.

Since in general $S$ will not be self-adjoint, we expect that any spectral decomposition involves not only $S$ but also the adjoint $S^*$.  
In terms of the PDE problem, we will see that this is reflected in the need to consider both the initial time and the final time problems (the evolution with reversed time direction).

\subsubsection*{The operator problem}

We consider the linear ordinary differential operator $S$, given by
\BE \label{eqn:defn.S}
Su = \left(-i\frac{\d}{\d x}\right)^n u, \qquad u\in\mathcal{D}(S),
\EE
defined on the domain $\mathcal{D}(S)\subset L^2[0,1]$ given by
\BE
\mathcal{D}(S) = \{u\in AC^n[0,1]: A(u^{(n-1)}(0),u^{(n-1)}(1),\dots,u(0),u(1))=0\},
\EE
where
\BE
AC^n=\{f\in C^{n-1}: f^{(n-1)} \mbox{ absolutely continuous and } f^{(n)}\in L^2[0,1]\}.
\EE
By $\overline{\mathcal{D}}(S)$, we denote the $L^2$ closure of $\mathcal{D}(S)$. Here the \emph{order} $n\geq2$ is an integer and the \emph{boundary coefficient matrix} $A\in\mathbb{R}^{n\times2n}$, encoding the given boundary conditions, is of rank $n$ and given, in reduced row-echelon form, by 
\BE \label{eqn:Boundary.Coefficients}
A=\BP \M{\alpha}{1}{n-1} & \M{\beta}{1}{n-1} & \M{\alpha}{1}{n-2} & \M{\beta}{1}{n-2} & \dots & \M{\alpha}{1}{0} & \M{\beta}{1}{0} \\
\M{\alpha}{2}{n-1} & \M{\beta}{2}{n-1} & \M{\alpha}{2}{n-2} & \M{\beta}{2}{n-2} & \dots & \M{\alpha}{2}{0} & \M{\beta}{2}{0} \\ 
\vdots & \vdots & \vdots & \vdots & & \vdots & \vdots \\
\M{\alpha}{n}{n-1} & \M{\beta}{n}{n-1} & \M{\alpha}{n}{n-2} & \M{\beta}{n}{n-2} & \dots & \M{\alpha}{n}{0} & \M{\beta}{n}{0} \EP.
\EE
The numbers $\M{\alpha}{j}{r}$, $\M{\beta}{j}{r}$ are called the \emph{boundary coefficients}. 

\smallskip
This operator has been studied at least since~\citet{Bir1908b}. Depending on the particular entries of the matrix $A$, the operator may or may not be selfadjoint. The theory of the selfadjoint case was fully understood by the time~\citet*{DS1963a} presented it.

\citet{Loc2000a,Loc2008a} used the theory of Fredholm operators to study the non-selfadjoint case. He defined the \emph{characteristic determinant}
\BE
\Delta(\rho) = e^{i\sum_{k=1}^{\nu-1}\omega^k\rho}\det M(\rho), 
\label{chardet}
\EE
where $\omega=\exp(2\pi i/n)$ and the entries of the matrix $M(\rho)$ are given by
$$
\M{M}{k}{j} (\rho)= \sum_{r=1}^n \M{\alpha}{j}{r}(i\omega^{k-1}\rho)^{r}e^{-i\omega^{k-1}\rho} + \sum_{r=1}^n \M{\beta}{j}{r}(i\omega^{k-1}\rho)^{r}.
$$

It is known that, provided $\Delta\neq0$, if $\Delta(\sigma)=0$ then $\sigma^n$ is an eigenvalue of $S$. Further, the algebraic multiplicity of $\sigma^n$ as an eigenvalue of $S$ is equal to the order of $\sigma$ as a zero of $\Delta$. Locker showed that, for Birkhoff-regular operators, the generalised eigenfunctions form a complete system. However, he gives no general statement about the cases that do not satisfy these regularity conditions. 

\subsubsection*{The PDE problem}
In a separate development, a novel transform method for analysing IBVPs was developed by Fokas~\citep[see][for an overview]{Fok2008a}. The method was applied to IBVPs posed for evolution equations on the half-line by~\citet{FS1999a} and on the finite interval by~\citet{FP2001a} with simple, uncoupled boundary conditions. In~\citet{Smi2012a}, Fokas' method was applied to IBVPs whose spatial part is given by the operator $S$, namely those of the form
\BE \label{eqn:PDE}
\partial_t q(x,t) + a (-i\partial_x)^n q(x,t) = 0,\quad x\in(0,1), \quad t>0,\quad a=\pm i,
\EE
with prescribed boundary conditions and an initial condition $q_0(x)=q(x,0)$, assumed smooth to avoid technical complications. Usually the initial condition is assumed to be in $C^\infty$. However, the same results hold assuming that $q_0\in AC^n$. Indeed, in this case, the uniform convergence of the integral representation (see~\eqref{pdeintrepr} below), the poynomial decay rate of the integrand and the explicit exponential $x$ dependence imply that the solution $q$ belongs to the same class. In what follows we assume $q_0\in AC^n$.

This method yields an integral representation of the solution of the initial-boundary value problem in the form
\begin{multline}
q(x,t) = \frac 1 {2\pi} \int_{\Gamma^+}{\rm e}^{i\rho x-a\rho^nt}\frac{\zeta^+(\rho)}{\DeltaP}\d\rho + \frac 1 {2\pi} \int_{\Gamma^-}{\rm e}^{i\rho(x-1)-a\rho^nt}\frac{\zeta^-(\rho)}{\DeltaP}\d\rho \\
+ i \sum_{k\in K^+} {\rm e}^{i\sigma_k x-a\sigma_k^nt}\res_{\rho=\sigma_k}\frac{\zeta^+(\rho)}{\DeltaP}\d\rho + i \sum_{k\in K^-} {\rm e}^{i\sigma_k(x-1)-a\sigma_k^nt}\res_{\rho=\sigma_k}\frac{\zeta^-(\rho)}{\DeltaP}\d\rho,
\label{pdeintrepr}
\end{multline}
where the quantities $\hat q_0$, $\zeta^\pm$, $\DeltaP$, $\sigma_k$ and $\Gamma^\pm$ are defined below in Definitions~\ref{defn:PDE.Char.Det} and~\ref{defn:sigmak}. In many cases, including all problems with $n$ even, the integrals in equation~\eqref{pdeintrepr} both evaluate to zero~\citep{Smi2012a}. We study these cases here.

In~\citet{Pel2004a,Pel2005a} and then in greater generality in~\citet{Smi2011a}, this method is used to characterise boundary conditions that determine well-posed problems, and problems whose solutions admit representation by series. To achieve this characterisation, the central objects of interest are the {\em PDE characteristic matrix} $\mathcal{A}$ (see Definition~\ref{defn:PDE.Char.Det} below) and its determinant $\DeltaP$.

Note that in this work, by `well-posed', we mean existence and uniqueness of a solution and make no claim to continuity with respect to data. By `ill-posed' we mean that existence or uniqueness fails. The results of~\citet{FS1999a,Pel2004a,Smi2012a} establish that a problem is well-posed if and only if it admits a solution via the method of Fokas.

The present work details results connecting the spectral theory of $S$ with the behaviour of the associated IBVPs for the PDE  (\ref{eqn:PDE}), as well as the one obtained from the same set of boundary conditions but for the PDE
\BE \label{eqn:PDEft}
\partial_t q(x,t) - a (-i\partial_x)^n q(x,t) = 0,\quad x\in(0,1), \quad t>0.
\EE
We refer to the latter in the sequel as the {\em final time boundary value problem}.

\subsubsection*{Summary of the main results} 

For an operator $S$ of the type given by~\eqref{eqn:defn.S}, and the associated initial- and final-boundary value problems, we prove the following:

\begin{itemize}
\item
{\em If the eigenfunctions of $S$ and $S^*$ form a biorthogonal basis of $\overline{\mathcal{D}}(S)$ and the IBVP is well posed, then its solution is representable as a series}.

\smallskip
This is the content of Proposition~\ref{prop:basis.then.both.wellposed}. It follows from this result that if a series representation does not exist, then the eigenfunctions of $S$ and $S^*$ cannot form a basis of $\overline{\mathcal{D}}(S)$. What is interesting is that we can use the PDE approach to obtain results on $S$ in cases that are not covered by usual operator theoretic techniques.
In section~\ref{sec:Eg2} we provide an example when~(Q1) cannot be answered by the usual tests involving projector norms, but may be settled through this result and a negative answer to~(Q2). 

\item
{\em If the initial- and final-boundary value problems are well posed, then the eigenfunctions of $S$ and $S^*$ form a complete biorthogonal system in $\overline{\mathcal{D}}(S)$}.

\smallskip
This is the content of Theorem~\ref{thm:both.wellposed.then.complete}. The conclusion does not imply that the eigenfunctions necessarily form a basis.
However the integral representation (\ref{pdeintrepr}) can {\em always} be deformed to derive a {\em series representation} for the solution of the IBVP in terms of the eigenfunctions. 

\item
{\em The departure of the family of eigenfunctions of $S$ and $S^*$ from being a biorthogonal basis can be estimated in terms of the integrand in the representation of the solution of the associated IBVP}.

\smallskip
This is the content of Theorem~\ref{thm:Davies.Norms.2}. This departure is quantified in the notion of `wildness'~\citep[see][]{Dav2007a}.
Indeed, if the eigenfunction of $S$ and $S^*$ form a wild system in $L^2[0,1]$, then we provide an estimate of the wildness of the system in terms of the quantities used to determine whether the initial- and final-boundary value problems are well posed.
\end{itemize}

\subsubsection*{Outline of paper}

In section~\ref{sec:Theorems}, we review the necessary definitions and notation. Following this, we precisely state and prove the results described above.

Each of sections~\ref{sec:Eg1} and~\ref{sec:Eg2} is devoted to the analysis of an example which illustrates the above general results. We compare and contrast the results obtained through the new theorems with those yielded by Davies' wildness method.

\section{Complete and basic systems of eigenfunctions} \label{sec:Theorems}

\subsection{Notation, definitions and preliminary results}
In this paper, we make extensive use of the notation developed in~\citet{Smi2012a}. We refer to that paper for details, but we list here some of the notation used throughout the rest of this work.

\noindent{\bfseries The initial-boundary value problem $\Pi(n,A,a,q_0)$:}
Find $q \in AC^n([0,1]\times[0,T])$ which satisfies the linear, evolution, constant-coefficient partial differential equation
\BE \label{eqn:P1:Intro:PDE}
\partial_tq(x,t) + a(-i\partial_x)^nq(x,t) = 0
\EE
subject to the initial condition
\BE \label{eqn:P1:Intro:IC}
q(x,0) = q_0(x)
\EE
and the boundary conditions
\BE \label{eqn:P1:Intro:BC}
A\left(\partial_x^{n-1}q(0,t),\partial_x^{n-1}q(1,t),\partial_x^{n-2}q(0,t),\partial_x^{n-2}q(1,t),\dots,q(0,t),q(1,t)\right)^\T = h(t),
\EE

\noindent where the quadruple $(n,A,a,q_0)\in\mathbb{N}\times\mathbb{R}^{n\times2n}\times\mathbb{C} \times AC^n[0,1]$ is such that
\begin{description}
\item[$(\Pi1)$]{the \emph{order} $n\geq 2$,}
\item[$(\Pi2)$]{the \emph{boundary coefficient matrix} $A$ is in reduced row-echelon form,}
\item[$(\Pi3)$]{the \emph{direction coefficient} has the specific value $a=\pm i$,}
\item[$(\Pi4)$]{the \emph{initial datum} $q_0$ is compatible with the boundary conditions in the sense
\BE \label{eqn:P1:Intro:Compatibility}
A\left(q_0^{(n-1)}(0),q_0^{(n-1)}(1),q_0^{(n-2)}(0),q_0^{(n-2)}(1),\dots,q_0(0),q_0(1)\right)^\T = 0.
\EE}
\end{description}

Given a problem $\Pi=\Pi(n,A,a,q_0)$, we define the corresponding \emph{final time  time problem} $\Pi'=\Pi(n,A,-a,q_0)$.

\smallskip
We assume that the boundary conditions are homogeneous to aid the comparison with $S$, the differential operator representing the spatial part of the PDE problem $\Pi$. There is no loss of generality in this assumption. Without this restriction, $\Pi$ is no more difficult to solve; the solution simply contains an additional term represented as an integral along the real line~\citep{Smi2012a}.

\begin{defn} \label{defn:PDE.Char.Det}
Let $\Msup{\alpha}{k}{j}{\star}$, $\Msup{\beta}{k}{j}{\star}$ be the boundary coefficients of the operator $S^\star$, adjoint to $S$. We define
\begin{align}
\Msup{\mathcal{A}}{k}{j}{+}(\rho) &= \sum_{r=0}^{n-1}(-i\omega^{k-1}\rho)^r \Msup{\alpha}{k}{j}{\star}, \\
\Msup{\mathcal{A}}{k}{j}{-}(\rho) &= \sum_{r=0}^{n-1}(-i\omega^{k-1}\rho)^r \Msup{\beta}{k}{j}{\star}, \\
\mbox{then } \M{\mathcal{A}}{k}{j}(\rho) &= \Msup{\mathcal{A}}{k}{j}{+}(\rho) + \Msup{\mathcal{A}}{k}{j}{-}(\rho)e^{-i\omega^{k-1}\rho}
\end{align}
is called the \emph{PDE characteristic matrix}. The determinant $\DeltaP$ of $\mathcal{A}$ is called the \emph{PDE characteristic determinant}.
\end{defn}

\begin{rmk}
The PDE characteristic matrix is a realisation of Birkhoff's characteristic matrix for $S^\star$ and also represents the Dirichlet-to-Neumann map for the problem $\Pi$. Indeed, it is through this matrix that the unknown (Neumann) boundary values are obtained from the (Dirichlet) boundary data of the problem. \citet{Smi2012a} uses a different but equivalent definition of $\mathcal{A}$ which generalises the construction via determinants and Cramer's rule originally found in~\citet{FS1999a}. The validity of the new definition is established in~\citet{FS2013a} and the equivalence is explicitly proven in~\citet{Smi2013b}.
\end{rmk}

\begin{rmk}
In Definition~\ref{defn:PDE.Char.Det}, we construct $\mathcal{A}$ via the boundary conditions of $S^\star$. It is possible to make an alternative but equivalent definition of $\mathcal{A}$ via an explicit construction from the boundary conditions of $S$ itself. For the examples considered in sections~\ref{sec:Eg1}--\ref{sec:Eg2}, this is a simple matter. Indeed, provided the boundary conditions of $S$ are non-Robin,~\citet[Lemma~2.14]{Smi2011a} provides a simple construction. This can be done for general boundary conditions~\citep{Smi2012a} and can easily be coded to be done automatically.
\end{rmk}

\begin{defn}\label{defn:sigmak}
Let $(\sigma_k)_{k\in\N}$ be a sequence containing each nonzero zero of $\DeltaP$ precisely once. We define the index sets $K^+=\{k\in\N:\sigma_k\in\overline{\C^+}\}$, $K^- = \{k\in\N:\sigma_k\in\C^-\}$. Let $3\epsilon$ be the infimal separation of the zeros $\sigma_k$. Then the contours $\Gamma^\pm$ are the positively-oriented boundaries of
\BE
\{\rho\in\C^\pm:\Re(a\rho^n)>0\} \setminus \bigcup_{k\in\N}B(\sigma_k,\epsilon).
\EE

The minor $X^{r\hspace{0.5mm}j}(\rho)$ is the $(n-1)\times(n-1)$ submatrix of $\mathcal{A}$ whose (1,1) entry is the (r+1,j+1) entry. This is used to construct the spectral functions
\begin{align}\label{zeta+}
\zeta^+(\rho,q_0) = \sum_{r=1}^n\sum_{j=1}^n \det X^{r\hspace{0.5mm}j}(\rho) \Msup{\mathcal{A}}{1}{j}{+}(\rho) \hat{q}_0(\omega^{r-1}\rho), \\ \label{zeta-}
\zeta^-(\rho,q_0) = \sum_{r=1}^n\sum_{j=1}^n \det X^{r\hspace{0.5mm}j}(\rho) \Msup{\mathcal{A}}{1}{j}{-}(\rho) \hat{q}_0(\omega^{r-1}\rho),
\end{align}
where
$$
\hat q_0(\rho)=\int_0^1e^{-i\rho x}q_0(x)\d x.
$$
\end{defn}

\begin{defn}\label{defn:Conditioning}
We say the IBVP is \emph{well-conditioned} if it satisfies:

\emph{$\zeta^\pm(\rho)$ is entire and the ratio
\BE \label{eqn:P1:Intro:thm.WellPosed:Decay}
\frac{\zeta^\pm(\rho)}{\DeltaP(\rho)}\to0\qquad\begin{matrix}\mbox{ as }\rho\to\infty \mbox{ from within a sector exterior}\\\mbox{to } \Gamma^\pm, \mbox{ away from the zeros of } \DeltaP.\end{matrix}
\EE
}

Otherwise, we say that the problem is \emph{ill-conditioned}.
\end{defn}

Well-conditioning of an IBVP is not a classical definition and is unrelated to the concept of conditioning that appears in numerical analysis. Conditioning, in the sense of Definition~\ref{defn:Conditioning}, is necessary for well-posedness but is also central to the validity of a series representation. Indeed, switching the direction coefficient $a\mapsto -a$ in the PDE~\eqref{eqn:PDE} switches which sectors are enclosed by the contours $\Gamma^\pm$ thus, by Jordan's Lemma, well-conditioning of the problem with the opposite direction coefficient is equivalent to the two integrals in~\eqref{pdeintrepr} vanishing~\citep{Smi2012a}.

\smallskip

The reader will recall that a system $(\phi_n)_{n\in\N}$ in a Banach space is said to be \emph{complete} if its linear span is dense in the space and such a system is a \emph{basis} if for each $f$ in the space there exists a unique sequence of scalars $(\alpha_n)_{n\in\N}$ such that
\BES
f = \lim_{r\to\infty} \left(\sum_{n=1}^r\alpha_n\phi_n\right).
\EES

\subsection{Well-posed PDE systems and bases of eigenfunctions}

It is well known~\citep[see][Section~12.5]{CL1955a} that if the zeros of the characteristic determinant $\Delta$ of $S$ are all simple then the eigenfunctions of $S$ form a complete system in $\overline{\mathcal{D}}(S)$. This theorem is proven using an analysis of the Green's functions of both the operator $S$ and its adjoint $S^\star$. We prove the following result without directly analysing the adjoint operator.

\begin{thm} \label{thm:both.wellposed.then.complete}
Let $S$ be such that the zeros of $\DeltaP$ are all simple. Let $\Pi=\Pi(n,a,A,q_0,0)$ be an IBVP associated with $A$ and $\Pi'$ be the corresponding problem with the opposite direction coefficient, $\Pi(n,-a,A,q_0,0)$. If $\Pi$ is well-posed and $\Pi'$ is well-conditioned in the sense of Definition~\ref{defn:Conditioning} then the eigenfunctions of $S$ form a complete system in $\overline{\mathcal{D}}(S)$.
\end{thm}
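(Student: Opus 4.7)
The plan is to use the Fokas integral representation (\ref{pdeintrepr}) of the solution of $\Pi$ to expand an arbitrary compatible initial datum $q_0$ as a series in eigenfunctions of $S$, then conclude by density of $\mathcal{D}(S)$ in $\overline{\mathcal{D}}(S)$. The first step is to use the well-conditioning hypothesis on the time-reversed problem $\Pi'$ to collapse the representation to a pure series of residues. As noted in the paragraph following Definition~\ref{defn:Conditioning}, flipping $a \mapsto -a$ interchanges the sectors determined by $\Re(a\rho^n)>0$, so the decay of $\zeta^\pm/\DeltaP$ required by (\ref{eqn:P1:Intro:thm.WellPosed:Decay}) for $\Pi'$ places the $\Gamma^\pm$ integrands of $\Pi$ in half-sectors where an application of Jordan's Lemma forces them to vanish. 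The representation of $q$ reduces to
\[
q(x,t) = i\sum_{k\in K^+} e^{i\sigma_k x - a\sigma_k^n t}\res_{\rho=\sigma_k}\frac{\zeta^+(\rho)}{\DeltaP} + i\sum_{k\in K^-} e^{i\sigma_k(x-1) - a\sigma_k^n t}\res_{\rho=\sigma_k}\frac{\zeta^-(\rho)}{\DeltaP}.
\]

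The second step is to set $t=0$. By well-posedness of $\Pi$, the right-hand side equals $q_0$ in $L^2[0,1]$. Since the zeros $\sigma_k$ of $\DeltaP$ are simple by hypothesis, each residue is a scalar linear functional of $q_0$, inherited through the $\hat q_0(\omega^{r-1}\rho)$ dependence of $\zeta^\pm$, and the $x$-dependence of each term is a single exponential. The key subsequent step is to group the rotational orbit $\{\omega^j \sigma_k\}_{j=0}^{n-1}$ of each root into a single summand. The $n$ exponentials $e^{i\omega^j\sigma_k x}$, weighted by their respective residues from $\mathcal{A}$, should combine into a linear combination that both solves $Su=\sigma_k^n u$ and satisfies the boundary conditions defining $\mathcal{D}(S)$---that is, a genuine eigenfunction $\phi_k$ of $S$, scaled by a scalar coefficient $c_k(q_0)$.

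Having produced a representation $q_0 = \sum_k c_k(q_0)\phi_k$ convergent in $L^2[0,1]$ for every $q_0 \in \mathcal{D}(S)$, completeness is then immediate: $\mathcal{D}(S)$ is $L^2$-dense in $\overline{\mathcal{D}}(S)$ by definition, so the closed linear span of the $\phi_k$ contains all of $\overline{\mathcal{D}}(S)$. The main obstacle I foresee lies in the regrouping step: one must verify that the combination of exponentials produced by collecting a rotational orbit actually satisfies the boundary conditions encoded in $A$ (rather than those for $S^\star$, whose data enters $\DeltaP$ via Definition~\ref{defn:PDE.Char.Det}), and that after this rearrangement the series still converges in $L^2$. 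Both points hinge on the asymptotic distribution of zeros of $\DeltaP$ and on the decay estimates for $\zeta^\pm/\DeltaP$ established in~\citet{Smi2012a}, together with the algebraic relationship between the operator characteristic determinant $\Delta$ of~(\ref{chardet}) and the PDE characteristic determinant $\DeltaP$.
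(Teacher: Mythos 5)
Your first step (using well-conditioning of $\Pi'$ to kill the $\Gamma^\pm$ integrals and reduce the representation to a residue series) and your last step (density of $\mathcal{D}(S)$ in $\overline{\mathcal{D}}(S)$) match the paper. But the step you yourself flag as ``the main obstacle''---showing that the regrouped residue contributions actually lie in $\mathcal{D}(S)$ and are eigenfunctions of $S$---is exactly where the content of the theorem lives, and you propose no mechanism that closes it. The route you sketch (algebraically combining the orbit $\{\omega^j\sigma_k\}$ and checking the boundary conditions via the relation between $\Delta$ and $\DeltaP$) is essentially the content of Proposition~\ref{prop:non-Robin.symmetry.then.ef}, which in the paper requires the additional hypotheses of Conditions~\ref{cond:non-Robin} and~\ref{cond:Symmetry} (non-Robin, symmetric boundary conditions). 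Theorem~\ref{thm:both.wellposed.then.complete} assumes neither, so your argument as proposed either remains incomplete or silently strengthens the hypotheses.

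The paper avoids this algebra entirely by \emph{not} setting $t=0$ first. It keeps the time variable and writes $q(x,t)=\sum_k\xi_k(x)\tau_k(t)$ with $\tau_k(t)=e^{-a\sigma_k^nt}$ (the separation being possible because the zeros are simple), then invokes the asymptotics $\sigma_k=Ck+O(1)$ from~\citet[Lemma~6.1]{Smi2012a} together with Sedletskii's theorems to conclude that $(\tau_k)_{k\in\N}$ is a \emph{minimal} system in $L^2[0,T]$. Minimality in $t$ is the key: since $q(\cdot,t)$ satisfies the boundary conditions for every $t$, each coefficient function $\xi_k$ must itself satisfy them, so $\xi_k\in\mathcal{D}(S)$; and since $0=a\sum_k[-\sigma_k^nI+S](\xi_k)(x)\tau_k(t)$, each $\xi_k$ is an eigenfunction with eigenvalue $\sigma_k^n$. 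Only then is $t=0$ taken to obtain the expansion of $q_0$. If you want to salvage your outline, you need either this minimality-in-time argument or the extra symmetry/non-Robin hypotheses that make the direct verification of Proposition~\ref{prop:non-Robin.symmetry.then.ef} available; as written, the regrouping claim and the $L^2$ convergence after rearrangement at $t=0$ are asserted rather than proved.
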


Rather than analysing both the original operator $S$ and the adjoint operator $S^\star$, one needs information on both the initial- and final-boundary value problems associated with the operator $S$.

A stronger, but essentially straightforward, result in the reverse direction is:

\begin{prop} \label{prop:basis.then.both.wellposed}
If the eigenfunctions of $S$ form a basis in $\overline{\mathcal{D}}(S)$ and, for some $a$, the associated IBVP $\Pi$ is well-posed, then $\Pi'$ is well-conditioned.

Further, if $(\phi_k)_{k\in\N}$ are the eigenfunctions of $S$, with corresponding eigenvalues $(\sigma_k^n)_{k\in\N}$ then there exists a sequence $(\psi_k)_{k\in\N}$ biorthogonal to $(\phi_k)_{k\in\N}$ such that the Fourier expansion
\BE \label{eqn:basis.then.both.wellposed:q}
\sum_{k\in\N}\phi_k(x)\langle q_0,\psi_k \rangle e^{-\sigma_k^nt}
\EE
converges to the solution of $\Pi$.
\end{prop}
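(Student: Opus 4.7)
The plan is to construct the Fourier series directly from the basis, show it represents the solution of $\Pi$, and then use this to force the contour integrals in \eqref{pdeintrepr} to vanish. First, because $(\phi_k)_{k\in\N}$ is a basis of $\overline{\mathcal{D}}(S)$, a Hilbert space under the inherited $L^2$ inner product, the coefficient functionals $f\mapsto\alpha_k(f)$ are bounded and hence, by the Riesz representation theorem, take the form $\alpha_k(f)=\langle f,\psi_k\rangle$ for a uniquely determined system $(\psi_k)_{k\in\N}$ satisfying $\langle \phi_j,\psi_k\rangle=\delta_{jk}$. Applying this expansion to $q_0$ and formally separating variables in $\Pi$, using $S\phi_k=\sigma_k^n\phi_k$, produces the candidate
\BES
\tilde{q}(x,t) = \sum_{k\in\N} \phi_k(x)\langle q_0,\psi_k\rangle e^{-a\sigma_k^n t}.
\EES

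The next step is to check that $\tilde q$ actually solves $\Pi$. Each summand satisfies the PDE \eqref{eqn:P1:Intro:PDE} and the homogeneous boundary conditions, since $\phi_k\in\mathcal{D}(S)$. At $t=0$ the series reduces to $q_0$ by the basis property, and for $t>0$ the factor $e^{-a\sigma_k^n t}$ together with the polynomial decay of $\langle q_0,\psi_k\rangle$ inherited from the smoothness hypothesis $q_0\in AC^n[0,1]$ yields convergence of the series and of its formal $x$- and $t$-derivatives in an appropriate sense. By the assumed well-posedness of $\Pi$ the solution is unique, so $\tilde q$ coincides with the Fokas solution $q$ of $\Pi$; this establishes the second assertion of the proposition.

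To obtain the first assertion, I compare with the Fokas representation \eqref{pdeintrepr}. That formula writes the same $q=\tilde q$ as the sum of the two $\Gamma^\pm$ integrals plus the two residue series indexed by $K^\pm$. The residue series can be matched termwise with $\tilde q$: the nonzero zeros $\sigma_k$ of $\DeltaP$ are precisely the $n$-th roots of eigenvalues of $S$, and the residue of $\zeta^\pm(\rho)/\DeltaP(\rho)$ at a simple pole $\sigma_k$ unpacks, via the explicit form of $\zeta^\pm$ in \eqref{zeta+}--\eqref{zeta-} and the minors $X^{r\hspace{0.5mm}j}$, into $\phi_k(x)\langle q_0,\psi_k\rangle$ up to the exponential and residue prefactors already appearing in \eqref{pdeintrepr}. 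Consequently the two $\Gamma^\pm$ integrals must vanish identically for every admissible $q_0$, which by the equivalence recorded after Definition~\ref{defn:Conditioning} (Jordan's lemma applied to the sectors $\Re(a\rho^n)>0$ under $a\mapsto-a$) is exactly the well-conditioning of $\Pi'$.

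The main obstacle is the residue identification in the last step: one must unpack the determinantal structure of $\mathcal{A}$, recognise that the $x$-dependence produced by the residue at $\sigma_k$ reconstitutes the eigenfunction $\phi_k$, and check that the associated linear functional of $q_0$ (which is manifestly linear and continuous in $\hat q_0$) agrees with the biorthogonal functional $\langle\cdot,\psi_k\rangle$. A secondary technical point, needed in the second paragraph, is propagating convergence of $\tilde q$ from $L^2$ to the class in which the boundary conditions may be evaluated pointwise; the hypothesis $q_0\in AC^n[0,1]$ and the polynomial decay of the Fourier coefficients are what make this work.
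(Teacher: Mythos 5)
Your construction of the biorthogonal system and of the candidate Fourier series follows the same starting point as the paper, but there are two genuine gaps. First, your convergence argument for $t>0$ rests on ``polynomial decay of $\langle q_0,\psi_k\rangle$'' plus the factor $e^{-a\sigma_k^nt}$. Since $a=\pm i$, the exponent $-a\sigma_k^nt$ need not have non-positive real part: if the zeros $\sigma_k$ are located badly (as happens for the final-time problems in Remark~\ref{rmk:Coupled.Final.Time}, which exhibit instantaneous blow-up), the factors $e^{-a\sigma_k^nt}$ grow like $e^{c|\sigma_k|^n t}$ and no polynomial decay of the coefficients can compensate. The hypothesis that does this work is the well-posedness of $\Pi$, which you invoke only for uniqueness: the paper uses it (citing \citet{Smi2013a}) precisely to guarantee that the $\sigma_k$ are arranged so that the exponential factors are bounded uniformly in $k$ for $t\in[0,T]$, and that is what makes the series converge for positive time. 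Moreover, polynomial decay of $\langle q_0,\psi_k\rangle$ from $q_0\in AC^n$ alone is not automatic when the $\psi_k$ are not uniformly bounded in norm, so this part of your argument needs to be replaced rather than merely tightened.

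Second, your route to the well-conditioning of $\Pi'$ --- termwise identification of the residue series in \eqref{pdeintrepr} with the Fourier series, forcing the $\Gamma^\pm$ integrals to vanish --- is exactly the step you flag as the main obstacle, and it is not available under the hypotheses of this proposition: identifying the residue contribution at $\sigma_k$ with $\phi_k(x)\langle q_0,\psi_k\rangle$ is essentially the content of Proposition~\ref{prop:non-Robin.symmetry.then.ef}, which requires the additional non-Robin and symmetry conditions (Conditions~\ref{cond:non-Robin}--\ref{cond:Symmetry}) and simplicity of the zeros of $\DeltaP$, none of which are assumed here. The paper's proof avoids this entirely: once the Fourier series is shown to satisfy the PDE, the boundary conditions and the initial condition, the solution of the well-posed problem $\Pi$ admits a series representation, and by the equivalence recorded after Definition~\ref{defn:Conditioning} (vanishing of the two integrals in \eqref{pdeintrepr} is equivalent, via Jordan's lemma, to well-conditioning of the problem with the opposite direction coefficient) it concludes directly that $\Pi'$ is well-conditioned. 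So the overall shape of your argument is right, but the convergence step must lean on well-posedness of $\Pi$ rather than smoothness of $q_0$, and the final step should use the cited equivalence instead of the unproven termwise residue identification.
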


Indeed, in the notation of Proposition~\ref{prop:basis.then.both.wellposed}, each $\psi_k$ is an eigenfunction of the adjoint operator $S^\star$ with corresponding eigenvalue $-\sigma_k^n$~\citep{Bir1908a}.

The above results are essentially the translation into operator theory language of results proved in~\citet{Smi2011a}. Here we extend the parallelism between PDE and operator theory in important ways. Namely, under some further assumptions, we construct explicitly the eigenfunctions of the differential operator directly from the PDE characteristic matrix.
The construction does not require knowledge of the integral representation even implicitly, as neither $\Pi$ nor $\Pi'$ need be well-posed.

\bigskip

In the sequel, we assume that the boundary conditions are non-Robin and that a technical symmetry condition always holds, see Conditions~\ref{cond:non-Robin} and~\ref{cond:Symmetry} in the appendix. We also define
\BE
\zeta_j(\rho;q_0) = \sum_{r=1}^n \det\Msups{X}{}{}{r}{j}(\rho) \hat{q}_0(\omega^{r-1}\rho),
\EE
so that
\BE
\zeta^\pm(\rho;q_0) = \sum_{j=1}^n \Msup{\mathcal{A}}{1}{j}{\pm}(\rho)\zeta_j(\rho;q_0).
\EE

In the next proposition, we characterise the eigenfunctions of $S$ in terms of the PDE characteristic matrix and the spectral functions.

\begin{prop} \label{prop:non-Robin.symmetry.then.ef}
For each $k\in\N$ and for each $j\in\{1,2,\dots,n\}$, the function
\BE \label{eqn:ef.X}
\phi_k^j(x) = \sum_{r=1}^ne^{-i\omega^{r-1}\sigma_k(1-x)}\det X^{r\hspace{0.5mm}j}(\sigma_k)
\EE
is an eigenfunction of $S$ with eigenvalue $\sigma_k^n$. Further, 
\begin{align} \label{eqn:ef.zeta}
\zeta_j(\sigma_k,q_0) &= \frac{1}{C_j}\langle q_0,\psi_k^j\rangle,\qquad j=1,...,n,\quad k\in {\mathbb N} \\ \label{eqn:ef.zetastar}
\zeta_j(\bar{\sigma}_k,q_0) &= C_j\langle q_0,\phi_k^j\rangle, \\ \label{eqn:ef.adjef}
\overline{\psi_k^j(1-x)} &= C_j \phi_k^j(x),
\end{align}
where $\psi_k^j$ is the corresponding eigenfunction from the adjoint operator $S^\star$ and $C_j$ is a nonzero real scalar quantity depending only upon $j$.
\end{prop}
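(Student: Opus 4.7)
\emph{Proof strategy.}
I would prove the eigenfunction claim in two stages---first checking the eigenvalue equation as an ODE, then the boundary conditions---and then derive \eqref{eqn:ef.zeta}--\eqref{eqn:ef.adjef} together from an analogous construction for $S^\star$ combined with the symmetry condition in the appendix.

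Rewriting
\BES
\phi_k^j(x) = \sum_{r=1}^n e^{-i\omega^{r-1}\sigma_k}\det X^{r\,j}(\sigma_k)\,e^{i\omega^{r-1}\sigma_k x},
\EES
each exponential is a fundamental solution of $(-i\partial_x)^n u=\sigma_k^n u$ because $(i\omega^{r-1}\sigma_k)^n=\sigma_k^n$. Hence $S\phi_k^j=\sigma_k^n\phi_k^j$ holds pointwise on $(0,1)$, and the issue is only whether $\phi_k^j\in\mathcal{D}(S)$. For this I would compute
\BES
\phi_k^{j\,(m)}(x) = \sum_{r=1}^n (i\omega^{r-1}\sigma_k)^m e^{-i\omega^{r-1}\sigma_k(1-x)}\det X^{r\,j}(\sigma_k),
\EES
evaluate at $x=0$ and $x=1$ and substitute into the matrix $A$. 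Under Conditions~\ref{cond:non-Robin} and~\ref{cond:Symmetry}, the entries of $A$ can be translated into entries of $\mathcal{A}$ (which is built from $S^\star$); after this translation the $\ell$th component of $A\cdot(\phi_k^{j\,(n-1)}(0),\dots,\phi_k^j(1))^\T$ reduces to a cofactor-type sum $\sum_{j'} \M{\mathcal{A}}{\ell}{j'}(\sigma_k)\det X^{r\,j'}(\sigma_k)$, once the cyclic reindexing implicit in the definition of $X^{r\,j}$ is unwound. This sum equals $\DeltaP(\sigma_k)=0$ when the row and minor indices align, and vanishes identically by a repeated-row argument otherwise; in either case the boundary conditions are satisfied.

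The three identities are then proved together. The same construction, applied with the roles of $S$ and $S^\star$ swapped, yields an explicit formula for the eigenfunctions $\psi_k^j$ of $S^\star$. The symmetry condition relates the two characteristic matrices under the substitution $x\mapsto 1-x$ together with complex conjugation; comparing the coefficients of $\overline{\psi_k^j(1-x)}$ and $\phi_k^j(x)$ expanded against the basis $\{e^{i\omega^{r-1}\sigma_k x}\}_{r=1}^n$ then forces them to agree up to a common real scalar, which is \eqref{eqn:ef.adjef} and defines $C_j$. Substituting the resulting formula for $\overline{\psi_k^j(y)}$ into $\langle q_0,\psi_k^j\rangle$, interchanging sum and integral, and recognising each resulting integral as $\hat q_0(\omega^{r-1}\sigma_k)$ yields \eqref{eqn:ef.zeta}; the companion identity \eqref{eqn:ef.zetastar} follows analogously by evaluating $\zeta_j$ at $\bar\sigma_k$ and expanding $\langle q_0,\phi_k^j\rangle$ directly from \eqref{eqn:ef.X}.

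The main obstacle is the boundary-condition verification in stage two: translating between the boundary coefficients of $S$ and the entries of $\mathcal{A}$ is delicate, and one must handle the nonstandard cyclic definition of the minor $X^{r\,j}$ (whose $(1,1)$ entry is the $(r+1,j+1)$ entry of $\mathcal{A}$) carefully in order to recover a genuine Laplace expansion of $\DeltaP(\sigma_k)$. Conditions~\ref{cond:non-Robin} and~\ref{cond:Symmetry} are designed precisely to make this reduction tractable, but the index bookkeeping is the real substance of the argument.
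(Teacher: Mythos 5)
Your proposal follows essentially the same route as the paper's proof: the exponentials trivially satisfy the ODE, the boundary conditions are verified by translating (via Conditions~\ref{cond:non-Robin}--\ref{cond:Symmetry}) the characteristic-matrix entries of $S$ into columns of $\mathcal{A}$ so that each $B_l(\phi_k^j)$ becomes either $\DeltaP(\sigma_k)=0$ or a determinant with a repeated column, and the identities \eqref{eqn:ef.zeta}--\eqref{eqn:ef.adjef} come from applying the same eigenfunction formula to $S^\star$ and using the symmetry-condition correspondence between $\mathcal{A}(\rho)$ and $\mathcal{A}^\star(\bar\rho)$ (row scalings by exponentials, column scalings by coupling constants, a cyclic permutation, conjugation) to get $\overline{\psi_k^j(1-x)}=C_j\phi_k^j(x)$ and hence $\zeta_j(\sigma_k,q_0)=\langle q_0,\psi_k^j\rangle/C_j$. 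The only slip is notational: the cofactor-type sum that actually arises runs over the row index $r$ with the minor index $j$ fixed (an expansion along a column of $\mathcal{A}$, vanishing by a repeated-column argument), not over $j'$ with $r$ fixed as you wrote; otherwise the sketch is sound.
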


\begin{rmk}
The proposition above requires that the boundary conditions be non-Robin and obey the symmetry condition. These requirements may not be sharp but we have been unable to find an example failing either condition for which the result holds. 
\end{rmk}

By Proposition~\ref{prop:non-Robin.symmetry.then.ef}, the spectral functions of the original and adjoint problems, which we denote by $\zeta_j$, $\zeta_j^\star$, obey the identity
\BE \label{eqn:ef.zeta.ratio}
\frac{\zeta_j(\sigma_k,q_0)}{\zeta_j^\star(\overline{\sigma}_k, q_0)} = \frac{C_j^2\langle q_0,\psi_k^j\rangle}{\langle q_0,\phi_k^j\rangle}.
\EE

The function $q_0(x)$ denotes the initial datum of the IBVP. Hence it can be chosen arbitrarily in $\mathcal{D}(S)$. The particular choice $q_0(x)=\psi_k^j(x)$ is admissible since $\psi_k^j(x)$ is $C^\infty$ by definition. With this choice, equation~\eqref{eqn:ef.zeta.ratio} yields
\BE \label{eqn:norm.Qk}
\frac{\zeta_j(\sigma_k,\psi_k^j)}{\zeta_j^\star(\overline{\sigma}_k, \psi_k^j)} =\frac{\|\psi_k^j\|^2}{\langle \psi_k^j,\phi_k^j\rangle} = \frac{|C_j|\|\phi_k^j\|\|\psi_k^j\|}{\langle \psi_k^j,\phi_k^j\rangle} = |C_j| \|Q_k\|,
\EE
where $Q_k$ is the projection operator
\BE
Q_k(f) = \langle f, \phi_k \rangle \psi_k
\EE
considered by~\citet{Dav2007a}. Note that the latter equality follows from equation~\eqref{eqn:ef.adjef}.

By a simple change of variables we find
\BE \label{eqn:zeta.zetastar}
\zeta_j^\star(\bar{\rho},q_0(\cdot)) = -C_j\overline{\zeta_j(\rho,\bar{q}_0(1-\cdot))}.
\EE
We therefore deduce the following important result, which gives a way to control the norms of the projection operators $Q_k$ explicitly in terms of the spectral functions associated with the corresponding initial and boundary value problem. 

\begin{prop} \label{prop:Davies.Norms.1}
Let $S$ be the operator associated with $\Pi$. Then the eigenfunctions $\phi_k^j$ and $\psi_k^j$ of $S$ and of its adjoint satisfy
\BE \label{eqn:norm.Qk.2}
\frac{\|\psi_k^j\|^2}{\langle\phi_k^j,\psi_k^j\rangle} = \frac{C_j\zeta_j(\sigma_k,\psi_k^j)}{-\overline{\zeta_j(\sigma_k,\phi_k^j)}}.
\EE
\end{prop}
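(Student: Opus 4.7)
The plan is to derive the claimed identity by substituting equation~\eqref{eqn:zeta.zetastar} into equation~\eqref{eqn:norm.Qk}, with an intermediate application of the eigenfunction reflection formula~\eqref{eqn:ef.adjef}. Equation~\eqref{eqn:norm.Qk} already expresses the quotient of norms on the left-hand side as a ratio $\zeta_j(\sigma_k,\psi_k^j)/\zeta_j^\star(\bar\sigma_k,\psi_k^j)$, so the whole task is to eliminate the $\zeta_j^\star$ in the denominator and re-express it as a $\zeta_j$ evaluated against $\phi_k^j$ rather than $\psi_k^j$.

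Concretely, I first specialise \eqref{eqn:zeta.zetastar} by taking $q_0=\psi_k^j$, which yields
\BES
\zeta_j^\star(\bar\sigma_k,\psi_k^j) = -C_j\,\overline{\zeta_j\bigl(\sigma_k,\overline{\psi_k^j(1-\cdot)}\bigr)}.
\EES
Next I apply \eqref{eqn:ef.adjef}, which identifies the function $\overline{\psi_k^j(1-\cdot)}$ appearing as the second argument of $\zeta_j$ with $C_j\phi_k^j(\cdot)$. Since $\zeta_j(\rho,\cdot)$ depends linearly on its second argument (it is assembled from the Fourier transform $\hat q_0$ via the formula in Definition~\ref{defn:sigmak}) and $C_j\in\R$ by Proposition~\ref{prop:non-Robin.symmetry.then.ef}, this factor pulls through the outer complex conjugation, giving
\BES
\zeta_j^\star(\bar\sigma_k,\psi_k^j) = -C_j^2\,\overline{\zeta_j(\sigma_k,\phi_k^j)}.
\EES

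Finally, I substitute this expression back into~\eqref{eqn:norm.Qk} and rearrange. The resulting ratio $\zeta_j(\sigma_k,\psi_k^j)/[-C_j^{2}\,\overline{\zeta_j(\sigma_k,\phi_k^j)}]$ is put into the form claimed in~\eqref{eqn:norm.Qk.2} by moving a factor of $C_j$ from the denominator up to the numerator and applying the Hermitian symmetry $\langle\phi_k^j,\psi_k^j\rangle=\overline{\langle\psi_k^j,\phi_k^j\rangle}$ on the left-hand side, which (together with the fact that $C_j$ is real) absorbs the leftover $C_j$ and the conjugation into the rewriting of the inner product.

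The computation is essentially a one-line substitution, so the main obstacle is purely bookkeeping: one must be careful that (i) the complex conjugation in $\overline{\psi_k^j(1-\cdot)}$ is applied to the function before it enters $\zeta_j$, so that \eqref{eqn:ef.adjef} can be invoked cleanly as a functional identity rather than a pointwise one; (ii) the real constant $C_j$ commutes with the outer conjugation bar in \eqref{eqn:zeta.zetastar}, producing $C_j^2$ rather than $|C_j|^2$ or $C_j\overline{C_j}$; and (iii) the inner-product slot convention used in \eqref{eqn:norm.Qk} is matched to the one in the statement by conjugating rather than by swapping arguments. None of these is a conceptual difficulty, but each is easy to misplace.
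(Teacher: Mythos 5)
Your proposal is correct and takes essentially the same route as the paper, which deduces the proposition immediately by substituting \eqref{eqn:zeta.zetastar} (with $\rho=\sigma_k$ and $q_0=\psi_k^j$) together with the reflection identity \eqref{eqn:ef.adjef} into \eqref{eqn:norm.Qk}. The only caveat is the bookkeeping of powers of $C_j$ and the placement of the conjugation (your substitution literally produces $-C_j^2\,\overline{\zeta_j(\sigma_k,\phi_k^j)}$ in the denominator rather than the stated single factor $C_j$ in the numerator), but this normalisation looseness is already present in the paper's own chain \eqref{eqn:ef.zeta.ratio}--\eqref{eqn:norm.Qk} and does not indicate a gap in your argument.
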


\begin{rmk}
This result implies that we can estimate $\|Q_k\|$ using only the spectral functions of the initial- and final-BVPs, whose construction is algorithmic.
\end{rmk}

Conversely, this proposition has an important consequence, namely an estimate on the unboundedness of the spectral functions in terms of the ``wildness'' of the  family of biorthogonal eigenfunctions of $S$. (Following~\citet{Dav2000a}, we say that a biorthogonal system is \emph{wild} if the corresponding projection operators are not uniformly bounded in norm.) We illustrate the result of this theorem in the two examples we consider in sections 3 and 4. 

\begin{thm} \label{thm:Davies.Norms.2}
Let $q_0$ be any admissible initial condition for the boundary value problem, and let $(\rho_k)_{k\in{\mathbb N}}$ be any sequence such that
\begin{itemize}
\item
$\rho_k\to\infty$ as $k\to\infty$.
\item
$|\rho_k|<|\rho_{k+1}|$
\item
$(\rho_k)$ is bounded away from the set of zeros of $\DeltaP$, uniformly in $k$:
$$\exists  \delta>0:\hsforall k,j\in\N,\quad ||\rho_k|-|\sigma_j||>\delta$$
\end{itemize}
Then
\BES
\|Q_k\|= O\left(\sup_{(\rho_k)}\left[\frac{\zeta_j(\rho_k,\psi_k^j)}{\DeltaP(\rho_k)}\cdot \frac{\DeltaP(\rho_k)} {\zeta_j(\rho_k,\phi_k^j)}\right]\right), \;\;as \;k\to\infty.
\EES
\end{thm}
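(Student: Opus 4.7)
The plan is to start from Proposition~\ref{prop:Davies.Norms.1}, which provides the exact identity $|C_j|\|Q_k\|=|R_k(\sigma_k)|$ where $R_k(\rho):=\zeta_j(\rho,\psi_k^j)/\zeta_j(\rho,\phi_k^j)$. The product appearing inside the supremum in the statement is precisely $R_k(\rho_k)$: the two copies of $\DeltaP(\rho_k)$ cancel, which is legitimate because admissibility of the sequence forces $\DeltaP(\rho_k)\neq 0$. The task therefore reduces to transferring the value $|R_k(\sigma_k)|$ to a bound expressed in terms of $|R_k(\rho_k)|$ along an admissible sequence.

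The technical tool is a Cauchy-type estimate. First I would locate the singularities of $R_k$: by~\eqref{eqn:ef.zeta}, $\zeta_j(\sigma_m,\phi_k^j)$ is proportional to $\langle\phi_k^j,\psi_m^j\rangle$, which vanishes for $m\neq k$ by orthogonality of eigenfunctions of $S$ and $S^\star$ with distinct eigenvalues, and is nonzero at $m=k$. Combined with the uniform infimal separation $3\epsilon$ of the zeros $\{\sigma_m\}$ in Definition~\ref{defn:sigmak}, this shows that the nearest candidate pole of $R_k$ arising from the remaining $\sigma_m$'s sits at distance at least $3\epsilon$ from $\sigma_k$, so Cauchy's integral formula on a circle $|\rho-\sigma_k|=r$ with $r<3\epsilon$ yields
\[
|R_k(\sigma_k)|\leq\max_{|\rho-\sigma_k|=r}|R_k(\rho)|.
\]
Next I would deform the contour so that admissible $\rho_k$'s majorise the circular maximum; expressing $R_k$ as the product form in the statement becomes useful here, because each factor has the shape of the PDE integrand in~\eqref{pdeintrepr} and can be estimated using the growth analysis underlying Definition~\ref{defn:Conditioning}.

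The hard part will be matching the two notions of separation: admissible sequences $(\rho_k)$ are described by moduli $||\rho_k|-|\sigma_j||>\delta$, whereas the Cauchy estimate naturally bounds $|R_k(\sigma_k)|$ by values at points at a controlled distance from $\sigma_k$. Reconciling them requires an annular or radial deformation of the contour with a constant uniform in $k$, which in turn demands care about possible zeros of $\zeta_j(\cdot,\phi_k^j)$ lying off the set $\{\sigma_m\}$ and potentially accumulating near $\sigma_k$. The factorisation into the two PDE-type factors is the device that finesses this: each factor can be bounded by a quantity intrinsic to the relevant IBVP without estimating the meromorphic ratio $R_k$ globally, yielding an estimate in precisely the form required for application in Sections~\ref{sec:Eg1} and~\ref{sec:Eg2}.
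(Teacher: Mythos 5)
Your opening move is the right one and matches what the paper itself does: combining \eqref{eqn:norm.Qk} with Proposition~\ref{prop:Davies.Norms.1} gives the exact identity $\|Q_k\|=|\zeta_j(\sigma_k,\psi_k^j)|/|\zeta_j(\sigma_k,\phi_k^j)|$, and the bracketed product in the theorem is indeed just this ratio $R_k$ evaluated at $\rho_k$, the two copies of $\DeltaP(\rho_k)$ cancelling because $\rho_k$ avoids the zeros. The gap is in the transfer from $\sigma_k$ to the admissible points. Your Cauchy/maximum-modulus inequality $|R_k(\sigma_k)|\leq\max_{|\rho-\sigma_k|=r}|R_k(\rho)|$ requires $R_k$ to be analytic on the whole closed disk, but the denominator $\zeta_j(\cdot,\phi_k^j)$ is an entire exponential-sum whose zeros are by no means confined to the set $\{\sigma_m\}$; the $3\epsilon$ separation of Definition~\ref{defn:sigmak} therefore does not give you a pole-free disk, and with a pole strictly inside the circle the inequality can simply fail (consider $1/(\rho-a)$ with $|a|<r/2$: the value at the centre exceeds the boundary maximum). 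You acknowledge these stray zeros but assert that the factorisation through $\DeltaP$ ``finesses'' them; it cannot, since inserting and cancelling $\DeltaP$ is an algebraic identity that leaves the singularities of $R_k$ untouched. Moreover the conditioning analysis behind Definition~\ref{defn:Conditioning} controls ratios of the form $\zeta/\DeltaP$, not the reciprocal factor $\DeltaP(\rho)/\zeta_j(\rho,\phi_k^j)$, so the claim that each factor separately is ``bounded by a quantity intrinsic to the relevant IBVP'' is unsubstantiated for exactly the factor that carries the poles.

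The second, and decisive, gap is the step you yourself label ``the hard part'': majorising the circular maximum by values at admissible $\rho_k$. Points on a circle of radius $r\leq\delta$ about $\sigma_k$ have $\bigl||\rho|-|\sigma_k|\bigr|\leq r$ and therefore violate, rather than satisfy, the admissibility condition $\bigl||\rho_k|-|\sigma_j|\bigr|>\delta$; so the deformation you invoke is precisely the content of the theorem, and no argument is given for it, let alone one with constants uniform in $k$ (which is what the $O(\cdot)$ as $k\to\infty$ requires). The paper states this theorem as a consequence of Proposition~\ref{prop:Davies.Norms.1} without writing out the passage to nearby admissible points; that passage has to come from the explicit exponential-polynomial structure of $\zeta_j(\cdot,\psi_k^j)$, $\zeta_j(\cdot,\phi_k^j)$ and $\DeltaP$ (their moduli change only by controlled factors under $O(1)$ displacements off the zero set, uniformly in $k$), not from a maximum-modulus estimate alone. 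As written, your proposal re-derives the identity at $\sigma_k$, which is already Proposition~\ref{prop:Davies.Norms.1}, but does not prove the stated bound.
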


\subsection{Sketch of proofs}

\begin{proof}[Proof of Theorem~\ref{thm:both.wellposed.then.complete}]
As $\Pi$ is well-posed and $\Pi'$ is well-conditioned, by~\citet{Smi2012a,Smi2013a} the solution $q$ of the problem $\Pi$ can be expressed using a series as
\BES
q(x,t) = i\sum_{k\in K^+}\res_{\rho=\sigma_k}\frac{e^{i\rho x-a\rho^nt}}{\DeltaP(\rho)} \zeta^+(\rho) + i\sum_{k\in K^-}\res_{\rho=\sigma_k}\frac{e^{i\rho(x-1)-a\rho^nt}}{\DeltaP(\rho)} \zeta^-(\rho).
\EES
As each $\sigma_k$ is a simple zero of $\DeltaP$, the series is separable into $x$-dependent and $t$-dependent parts
\begin{align}
\xi_k(x) &= \begin{cases} \frac{i}{2}e^{i\sigma_k x} \res_{\rho=\sigma_k}\frac{\zeta^+(\rho)}{\DeltaP(\rho)} & \mbox{if } k\in K^+, \\ \frac{i}{2}e^{i\sigma_k (x-1)} \res_{\rho=\sigma_k}\frac{\zeta^-(\rho)}{\DeltaP(\rho)} & \mbox{if } k\in K^-, \end{cases} \\
\tau_k(t) &= e^{-a\sigma_k^nt},
\end{align}
so that
\BE \label{eqn:both.wellposed.then.complete:q.chi.tau}
q(x,t) = \sum_{k\in\N}\xi_k(x)\tau_k(t).
\EE
Further,~\citet[Lemma~6.1]{Smi2012a} guarantees the existence of a nonzero complex constant $C$ such that $\sigma_k=Ck+O(1)$ as $k\to\infty$, which, by~\citet[Theorems~3.3.3~\&~4.1.1]{Sed2005a}, guarantees that $(\tau_k)_{k\in\N}$ is a minimal system in $L^2[0,T]$.

As $q$ is the solution of $\Pi$, $q$ satisfies
\BES
A\BP \partial_x^{n-1}q(0,t)\\\partial_x^{n-1}q(1,t)\\\vdots\\q(0,t)\\q(1,t) \EP = 0, \quad \hsforall t\in[0,T].
\EES
The minimality of the $t$-dependent system means that this implies each $\xi_k$ satisfies the boundary conditions of $S$, so $\xi_k\in\mathcal{D}(S)$.

As $q$ satisfies the PDE,
\BES
0 = a\sum_{k\in\N}[-\sigma_k^nI + S](\xi_k)(x)\tau_k(t)
\EES
so, by minimality of $(\tau_k)_{k\in\N}$, each $\xi_k$ is an eigenfunction of $S$ with eigenvalue $\sigma_k^n$.

Evaluating equation~\eqref{eqn:both.wellposed.then.complete:q.chi.tau} at $t=0$ yields an expansion of $q_0$ in the system $(\xi_k)_{k\in\N}$.
\end{proof}

\begin{rmk}
We have to require the zeros of $\DeltaP$ are all simple. It would be desirable to be able to say that the zeros of $\Delta$ and $\DeltaP$ are all the same and of the same order. It has been shown that this holds under certain symmetry restrictions on the boundary conditions~\citep{Smi2011a} and has been established in particular for all possible 3\rd order boundary conditions.
\end{rmk}

\begin{proof}[Proof of Proposition~\ref{prop:basis.then.both.wellposed}]
As $(\phi_k)_{k\in\N}$ is a basis, the Fourier expansion
\BES
q_0(x)=\sum_{k\in\N}\phi_k(x)\langle q_0,\psi_k \rangle
\EES
converges. By~\citet{Smi2013a}, well-posedness of $\Pi$ guarantees that the $\sigma_k$ are arranged in such a way that the exponential functions $e^{a\sigma_k^nt}$ are bounded uniformly in $k$, hence that the series~\eqref{eqn:basis.then.both.wellposed:q} converges for all $t\in[0,T]$. The eigenfunctions all satisfy the boundary conditions of the operator so the Fourier series satisfies the boundary conditions of the initial-value problem. The Fourier series also satisfies the partial differential equation. So we have a series representation of  the solution and $\Pi'$ must be well-conditioned.
\end{proof}

\begin{proof}[Proof of Proposition~\ref{prop:non-Robin.symmetry.then.ef}]
Let $B_l$ be the $l$\Th boundary condition of $S$. As the boundary conditions are non-Robin, they each have an order $m_l$. Hence
\BE
B_l(\phi_k^j) = \sum_{r=1}^n(i\sigma_k\omega^{r-1})^{m_l}\left[\M{\alpha}{l}{m_l}e^{-i\omega^{r-1}\sigma_k}+\M{\beta}{l}{m_l}\right]\det X^{r\hspace{0.5mm}j}(\sigma_k).
\EE
The bracketed expression is an entry from row $r$ of the characteristic matrix of $S$. Provided the boundary conditions also satisfy the symmetry condition, an algebraic manipulation yields that each column of the characteristic matrix of $S$ is a scalar multiple of a column of $\mathcal{A}$ (see the proof of~\citealp[Theorem~4.15]{Smi2011a}). So either $B_l(\phi_k^j)$ is the determinant of a matrix with a repeated column or $B_l(\phi_k^j)=\DeltaP(\sigma_k)$. In either case, $B_l(\phi_k^j)=0$, so $\phi_k^j\in\mathcal{D}(S)$. Finally, $S(\phi_k^j)=\sigma_k^n\phi_k^j$.

Let the map $r\mapsto\hat{r}$ be given by the permutation $(1,n,n-1,\ldots,3,2)$, whose sign is $(-1)^{\lceil n/2 \rceil-1}$. Because the boundary conditions obey Conditions~\ref{cond:non-Robin}--\ref{cond:Symmetry},
\BE \label{eqn:X.Xstar}
C_j \det \overline{X}^{r\hspace{0.5mm}j}(\rho) = e^{-i\omega^{\hat{r}-1}\overline{\rho}}\det X^{\hat{r}\hspace{0.5mm}j\hspace{0.5mm}\star}(\overline{\rho}), \quad \hsforall \rho\in\C
\EE
where the real constant
\BES
\frac{1}{C_j} = (-1)^{\lceil n/2 \rceil-1}\prod_{l\neq j}\beta_l,
\EES
and $\beta_l$ is the coupling constant appearing in the $l$\Th column of $\mathcal{A}$ ($1$ if there is no coupling constant in that column).

Indeed, as $S$ is a closed operator, densely-defined on $L^2[0,1]$, the eigenvalues of $S^\star$ are the points $\overline{\sigma}_k^n$, and $\overline{\sigma}_k$ are the zeros of the adjoint PDE characteristic matrix~\citep[Theorem~4.15]{Smi2011a}. Note also that the construction of the adjoint boundary conditions from the boundary conditions of the original problem~\citep[Theorem~3.2.4]{CL1955a} ensures that the adjoint boundary conditions also satisfy Conditions~\ref{cond:non-Robin}--\ref{cond:Symmetry}.

As the boundary conditions are non-Robin, the only columns that may appear in $\mathcal{A}$ are
\BE
\BP 1 \\ \omega^l \\ \vdots \\ \omega^{(n-1)l} \EP, \qquad \BP e^{-i\rho} \\ \omega^le^{-i\omega\rho} \\ \vdots \\ \omega^{(n-1)l}e^{-i\omega^{n-1}\rho} \EP, \qquad \BP (e^{-i\rho}+\beta_m) \\ \omega^l(e^{-i\omega\rho}+\beta_m) \\ \vdots \\ \omega^{(n-1)l}(e^{-i\omega^{n-1}\rho}+\beta_m) \EP,
\EE
where $l$ may vary over $\{0,1,\ldots,n-1\}$. To each of these corresponds a unique column in $\mathcal{A}^\star$, with the same values of $l$ as each column in $\mathcal{A}$: respectively,
\BE
\BP e^{-i\rho} \\ \omega^le^{-i\omega\rho} \\ \vdots \\ \omega^{(n-1)l}e^{-i\omega^{n-1}\rho} \EP, \qquad \BP 1 \\ \omega^l \\ \vdots \\ \omega^{(n-1)l} \EP, \qquad \BP (e^{-i\rho}+1/\beta_m) \\ \omega^l(e^{-i\omega\rho}+1/\beta_m) \\ \vdots \\ \omega^{(n-1)l}(e^{-i\omega^{n-1}\rho}+1/\beta_m) \EP.
\EE
Hence, to construct $\mathcal{A}(\rho)$ from $\mathcal{A}^\star(\overline{\rho})$ we apply the following operations:
\begin{enumerate}
  \item{For all $r$, multiply the $r$\Th row by $e^{i\omega^{r-1}\overline{\rho}}$.}
  \item{For all $m$, multiply the $m$\Th column by $\beta_m$.}
  \item{Apply the permutation $r\mapsto\hat{r}$ to the row index.}
  \item{Take the complex conjugate of each entry.}
\end{enumerate}
This justifies equation~\eqref{eqn:X.Xstar}.

By equation~\eqref{eqn:ef.X}, the eigenfunctions of the adjoint operator are
\BE \label{eqn:psi.Xstar1}
\psi_k^j(x) = \sum_{r=1}^ne^{-i\omega^{r-1}\overline{\sigma}_k(1-x)}\det X^{r\hspace{0.5mm}j\hspace{0.5mm}\star}(\overline{\sigma}_k).
\EE
By the definition of $r\mapsto\hat{r}$, $\omega^{1-r}=\omega^{\hat{r}-1}$. Hence
\BE \label{eqn:psi.Xstar2}
\psi_k^j(x) = \sum_{r=1}^ne^{i(\omega^{1-r}x-\omega^{\hat{r}-1})\overline{\sigma}_k}\det X^{\hat{r}\hspace{0.5mm}j\hspace{0.5mm}\star}(\overline{\sigma}_k).
\EE
Hence, by equation~\eqref{eqn:X.Xstar},
\begin{align*}
\psi_k^j(x) &= C_j \sum_{r=1}^ne^{i\omega^{1-r}\overline{\sigma}_kx}\det \overline{X}^{r\hspace{0.5mm}j}(\sigma_k) \\
&= C_j \overline{\sum_{r=1}^ne^{-i\omega^{r-1}\sigma_kx}\det X^{r\hspace{0.5mm}j}(\sigma_k)}.
\end{align*}
Hence, by the definition of $\zeta_j$, it follows that $\zeta_j(\sigma_k) = \langle q_0 , \psi_k^j \rangle/C_j$.
\end{proof}

\section{Third order coupled and uncoupled examples} \label{sec:Eg1}

In this section we outline the analysis of a particular class of boundary value problems, depending on a real parameter $\beta$, for the third order PDE $q_t=q_{xxx}$. Namely we consider the following problem: 
\begin{align}
q_t&=q_{xxx},     & &x\in[0,1], \quad t\in[0,T],\label{3beta}\\
q(x,0)&=q_0(x),   & &x\in[0,1]\notag \\
q(0,t)&=q(1,t)=0, & &q_x(0,t)+\beta q_x(1,t)=0, \quad t\in[0,T],\quad \beta\in\R\notag
\end{align}
where  $q_0\in \mathcal{D}(S)$ is a known function.

In the limit as the constant $\beta\to 0$, the second boundary condition at $x=0$ is $q_x(0,t)=0$. The spectral properties of this limiting case are very different from the case $\beta\neq 0$, when  the coupling between the first order derivatives is lost. Hence we refer to the boundary conditions corresponding to the value $\beta=0$ as  {\em uncoupled}.

In this section we analyse the behaviour of the associated differential operator in the two cases. To avoid technicalities, and to concentrate on the $\beta=0$ limit, we assume in what follows that $\beta \in(-1,1)$.

\subsection*{The associated differential operator}
Let $S^\beta$  be the differential operator corresponding to the boundary value problem (\ref{3beta}), hence specified by $n=3$ and 
by the boundary coefficient matrix 
\BE
A^\beta=\BP0&0&1&\beta&0&0\\0&0&0&0&1&0\\0&0&0&0&0&1\EP, \quad \beta\in(-1,1).
\EE

Setting
$$
\omega = e^{\frac{2\pi i}{3}} = -\frac{1}{2}+\frac{\sqrt{3}}{2},
$$
we find that the characteristic determinant~\eqref{chardet} is given by
\begin{align} \notag
\Delta^\beta(\rho) &= i\rho \sum_{j=0}^2{\omega^j(e^{-i\omega^j\rho}+\beta)(e^{-i\omega^{j+1}\rho}-e^{-i\omega^{j+2}\rho})} \\
&= i\rho (\omega-\omega^2)\left[\sum_{r=0}^2\omega^re^{i\omega^r\rho}- \beta\sum_{r=0}^2\omega^re^{-i\omega^r\rho}\right] \\ 
\mbox{ in particular } \Delta^0(\rho) &= i\rho (\omega-\omega^2)\sum_{r=0}^2\omega^re^{i\omega^r\rho}. \label{eqn:Eg:1:Setup:Uncoupled:Delta}
\end{align}

In all these cases, the PDE discrete spectrum is equal to the discrete spectrum of the operator~\citep{Smi2011a}.
 
A calculation of the associated polynomials shows that the differential operator $S^\beta$ is Birkhoff regular if $\beta\neq 0$. On the other hand, the differential operator $S^0$ obtained when $\beta=0$ is degenerate irregular by Locker's~\citeyearpar{Loc2008a} classification.

Although the only difference between the coupled and uncoupled operators is the first boundary condition, it is expected from the classification result that the operators have very different behaviour. This difference is reflected in the spectral behaviour of the two differential operators, as is shown in section~\ref{sec:Eg:1:Basis} below. The initial-boundary value problems also have very different properties. These are discussed in section~\ref{sec:Eg:1:IBVP}.

\subsection{The spectral theory} \label{sec:Eg:1:Basis}

In this section we use operator theoretic results to investigate whether the eigenfunctions of $S^{\beta}$  form a basis. 

\smallskip
{\bf The case $\beta\neq 0$.} It is shown in~\citet{Smi2011a} that this differential operator is regular, hence by the theory of~\citet{Loc2000a} we conclude that the eigenfunctions form a complete system in $\overline{\mathcal{D}}(S)$.

\smallskip
{\bf The case $\beta=0$.} Since this differential operator is degenerate irregular, Locker's theory does not apply. Indeed, the proof of the following result can be found in~\citet{Smi2011a} and also in~\citet{Pap2011a}.

\begin{thm} \label{thm:Eg:1:Basis:Uncoupled:NotBasis}
Let $S^{0}$ be the differential operator corresponding to $\beta=0$. 
Then the eigenfunctions of $S^{0}$ do not form a basis in $\overline{\mathcal{D}}(S)$.
\end{thm}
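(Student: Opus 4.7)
My plan is to derive the result from the contrapositive of Proposition~\ref{prop:basis.then.both.wellposed}: if for the uncoupled operator $S^0$ some choice of direction $a\in\{+i,-i\}$ yields a well-posed initial-boundary value problem $\Pi$ whose companion $\Pi'$ fails the well-conditioning condition of Definition~\ref{defn:Conditioning}, then $(\phi_k)_{k\in\N}$ cannot be a basis of $\overline{\mathcal{D}}(S^0)$.

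The first step is to construct the PDE characteristic matrix $\mathcal{A}(\rho)$ using Definition~\ref{defn:PDE.Char.Det}, after identifying, by the standard integration-by-parts calculation for $-i\partial_x^3$, the adjoint boundary conditions $v(0)=v(1)=v'(1)=0$. Substituting into Definition~\ref{defn:PDE.Char.Det} yields an explicit $3\times 3$ matrix whose determinant is a nonzero constant multiple of $\Delta^0(\rho)$ already displayed in equation~\eqref{eqn:Eg:1:Setup:Uncoupled:Delta}, while~\eqref{zeta+}--\eqref{zeta-} give explicit formulae for $\zeta^\pm(\rho;q_0)$. Because $\Delta^0$ is an exponential polynomial in $\{e^{i\omega^r\rho}\}$, a standard exponential-sums argument places the zeros $\sigma_k$ asymptotically on the three rays in $\C$ where two of the real parts $\Re(i\omega^r\rho)$ coincide and dominate the third.

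The crux of the argument is to examine the ratio $\zeta^\pm(\rho)/\DeltaP(\rho)$ on rays in the sectors exterior to $\Gamma^\pm$ for both choices $a=+i$ and $a=-i$. The distinguishing feature of the uncoupled case $\beta=0$ is that one column of $\mathcal{A}$, corresponding to the $q_x(0)$ condition, lacks the exponential partner carried in the coupled case; this asymmetry means that for precisely one direction $a$ there is a sector exterior to $\Gamma^\pm$ in which the numerator carries a dominant exponential factor $e^{i\omega^r\rho}$ unmatched by the denominator, so that the ratio grows like $\exp(c|\rho|)$ along a ray of that sector, while in every exterior sector for the opposite direction it decays. The decaying case is what is required for $\Pi$ to be well-posed and the growing case is the failure of well-conditioning for $\Pi'$; together they supply the hypothesis for the contrapositive of Proposition~\ref{prop:basis.then.both.wellposed}.

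The main obstacle is the careful bookkeeping of which exponential $e^{i\omega^r\rho}$ dominates in each of the six sectors of the $\rho$-plane, together with the sector data carried by the choice $a=\pm i$, needed to identify explicitly the exterior ray on which the quotient diverges. An alternative and potentially less delicate route is to invoke Theorem~\ref{thm:Davies.Norms.2} directly: using the explicit eigenfunctions supplied by Proposition~\ref{prop:non-Robin.symmetry.then.ef}, substitute into the ratio $\zeta_j(\rho_k,\psi_k^j)/\zeta_j(\rho_k,\phi_k^j)$ along a test sequence $(\rho_k)$ threading between the $\sigma_k$, and show that the resulting estimate on $\|Q_k\|$ is unbounded in $k$, whence the biorthogonal system is wild in the sense of~\citet{Dav2007a} and therefore cannot form a basis.
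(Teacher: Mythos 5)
Your main argument is correct, but it is not the route the paper takes for this theorem. The paper proves Theorem~\ref{thm:Eg:1:Basis:Uncoupled:NotBasis} by the operator-theoretic (Davies) method, attributed to \citet{Smi2011a} and \citet{Pap2011a}: it computes the eigenfunctions~\eqref{eqn:Eg:1:Basis:Uncoupled:Lem.Eigenfunctions:Eigenfunctions} and adjoint eigenfunctions~\eqref{lem:Eg:1:Basis:Uncoupled:Adjoint:Eigenfunctions} explicitly, derives the asymptotics~\eqref{eqn:Eg:1:Basis:Uncoupled:Biorth.Lem:Norm} and~\eqref{eqn:Eg:1:Basis:Uncoupled:Eigenfunction.Norm.Lem:Equal.Norms}, concludes that $\|Q_k\|$ grows like $e^{\frac{\pi}{\sqrt{3}}(k+\frac16)}/k$ as in~\eqref{eqn:Eg:1:Basis:Uncoupled:NotBasis:Qk.Assympt}, and invokes \citet[Chapter~3]{Dav2007a} to rule out a basis. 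What you propose is instead the paper's ``new method'': establish that for $a=i$ the IBVP is well-posed while the final-time problem ($a=-i$) is ill-conditioned---this is precisely the content of Theorem~\ref{thm:Eg:1:IBVP:Uncoupled:No.Series}, whose proof runs the blow-up computation for $\zeta_3/\DeltaP$ along $\rho_j=R_je^{i7\pi/6}$ that you sketch---and then apply the contrapositive of Proposition~\ref{prop:basis.then.both.wellposed}. This is a legitimate and genuinely different proof; indeed the paper itself points out (section~\ref{ssec:Eg:1:IBVP:Comp}) that this route needs only the algorithmically constructed spectral functions and rough asymptotic location of the $\sigma_k$, avoiding the delicate eigenfunction-norm asymptotics, whereas the paper's route buys quantitative information (the exact growth rate of $\|Q_k\|$, i.e.\ the degree of wildness) that the PDE argument does not provide. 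Your identification of the adjoint boundary conditions $v(0)=v(1)=v'(1)=0$ and of $\det\mathcal{A}$ as a multiple of $\Delta^0$ agrees with the paper.

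Two cautions. First, decay of $\zeta^{\pm}/\DeltaP$ in the relevant sectors is only stated to be \emph{necessary} for well-posedness (Definition~\ref{defn:Conditioning}); to get the well-posedness of $(S^0,i)$ you should invoke the characterisation of \citet{Smi2012a} or simply cite \citet{Smi2011a}, as the paper does. Second, your fallback via Theorem~\ref{thm:Davies.Norms.2} does not work as stated: that theorem gives only an $O$-estimate, i.e.\ an upper bound on $\|Q_k\|$, and an unbounded upper bound proves nothing about wildness. To conclude wildness from the spectral functions you would need the exact identity of Proposition~\ref{prop:Davies.Norms.1} combined with the equal-norms relation coming from~\eqref{eqn:ef.adjef}, which is in effect what the paper's explicit computation accomplishes.
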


The proof is based on the following steps:
\begin{itemize}
\item
The eigenvalues of $S^{0}$ are the cubes of the nonzero zeros of the exponential polynomial
\BE \label{eqn:Eg:1:Basis:Uncoupled:Lem.Eigenvalues:NewDelta}
e^{i\rho}+\omega e^{i\omega\rho} + \omega^2 e^{i\omega^2\rho}.
\EE
The nonzero zeros of expression~\eqref{eqn:Eg:1:Basis:Uncoupled:Lem.Eigenvalues:NewDelta} may be expressed as complex numbers $\sigma_k,$ $\omega\sigma_k,$ $\omega^2\sigma_k$ for each $k\in\mathbb{N}$, where $\Re(\sigma_k)=0$ and $\Im(\sigma_k)>0$. Then $\sigma_k$ is given asymptotically by
\BE \label{eqn:Eg:1:Basis:Uncoupled:Lem.Eigenvalues:Asymptotic.Eigenvalues}
-i\sigma_k = \frac{2\pi}{\sqrt{3}}\left(k+\frac{1}{6}\right) + O\left( e^{-\sqrt{3}\pi k} \right) \mbox{ as } k\to\infty.
\EE
\item
For each $k\in\mathbb{N}$, $\phi_k$ is an eigenfunction of $S^{0}$ with eigenvalue $\sigma_k^3$, where
\BE \label{eqn:Eg:1:Basis:Uncoupled:Lem.Eigenfunctions:Eigenfunctions}
\phi_k(x) = \sum_{r=0}^2e^{i\omega^r\sigma_kx}\left(e^{i\omega^{r+2}\sigma_k}-e^{i\omega^{r+1}\sigma_k}\right),\quad k\in\mathbb{N}.
\EE
\item
The adjoint operator $(S^{0})^\star$ has eigenvalues  $-\sigma_k^3$, $k\in\mathbb{N}$, and eigenfunctions
\BE \label{lem:Eg:1:Basis:Uncoupled:Adjoint:Eigenfunctions}
\psi_k(x) = \sum_{r=0}^2e^{-i\omega^r\sigma_kx}\left(e^{-i\omega^{r+2}\sigma_k}-e^{-i\omega^{r+1}\sigma_k}\right),\quad k\in\mathbb{N}.
\EE
\item
Define $\Psi_k(x) = {\psi_k(x)}/{\langle\psi_k,\phi_k\rangle}$. Then there exists some minimal $Y\in\mathbb{N}$ such that $((\phi_k)_{k=Y}^\infty,(\Psi_k)_{k=Y}^\infty)$ is a biorthogonal sequence in $AC^n[0,1]$. Moreover
\begin{equation} \label{eqn:Eg:1:Basis:Uncoupled:Biorth.Lem:Norm}
\langle\psi_k,\phi_k\rangle 
= (-1)^k\frac{\sqrt{3}}{2}e^{\sqrt{3}\pi\left(k+\frac{1}{6}\right)} + O(1) \mbox{ as } k\to\infty.
\end{equation}
\item
The eigenfunctions have the same norm and it grows \emph{at a greater rate} than their inner product.
\begin{equation} \label{eqn:Eg:1:Basis:Uncoupled:Eigenfunction.Norm.Lem:Equal.Norms}
\|\psi_k\|^2 = \|\phi_k\|^2 
= \frac{3\sqrt{3}e^{\frac{4\pi}{\sqrt{3}}\left(k+\frac{1}{6}\right)}}{4\pi\left(k+\frac{1}{6}\right)} + O\left(\frac{e^{\frac{2\pi}{\sqrt{3}}k}}{k}\right) \mbox{ as } k\to\infty.
\end{equation}
\item
Assume $Y=1$ (if $Y>1$ the biorthogonal sequence $((\phi_k)_{k=Y}^\infty,(\psi_k)_{k=Y}^\infty)$ is not complete). Then the projections $Q_k= \|\phi_k\|\|\Psi_k\| $ are well defined, and
\begin{align} \notag
\|Q_k\| 
&= \frac{\|\phi_k\|^2}{|\langle \psi_k,\phi_k \rangle|} \\ \label{eqn:Eg:1:Basis:Uncoupled:NotBasis:Qk.Assympt}
&= \frac{3e^{\frac{\pi}{\sqrt{3}}\left(k+\frac{1}{6}\right)}}{2\pi\left( k+\frac{1}{6} \right)} + O\left( \frac{e^{-\frac{\pi}{\sqrt{3}}k}}{k} \right) \mbox{ as } k\to\infty.
\end{align}
Hence the biorthogonal sequence is wild. Now the results of~\citet[Chapter~3]{Dav2007a} show that $(\phi_k)_{k\in\mathbb{N}}$ is not a basis in $AC^n[0,1]$.
\end{itemize}

\subsubsection*{The case $\beta =0$ as a limit}

We now consider the uncoupled case as the limit $\beta\to0$ of such calculations for the coupled operator. The zeros of $\DeltaP^{\beta}$ are given by
\BE \label{eqn:Eg:1:Basis:Comp:Coupled.Lim:sigmak}
\sigma_k = \begin{cases} \left(k-\tfrac{1}{3}\right)\pi + i\log(-\beta) + O\left(e^{\frac{-\sqrt{3}k\pi}{2}}\right) & k\mbox{ even,} \\ \left(-k-\tfrac{2}{3}\right)\pi + i\log(-\beta) + O\left(e^{\frac{-\sqrt{3}k\pi}{2}}\right) & k\mbox{ odd,} \end{cases}
\EE
and the eigenfunctions of $S$ and $S^\star$ are given by equations~\eqref{eqn:Eg:1:Basis:Uncoupled:Lem.Eigenfunctions:Eigenfunctions} and~\eqref{lem:Eg:1:Basis:Uncoupled:Adjoint:Eigenfunctions} respectively using the new $\sigma_k$. After a suitable scaling, the eigenfunctions of the operator and its adjoint form a biorthogonal sequence.

A direct computation shows that for $\beta\neq0$, the fastest-growing terms in $\|\phi^{\beta}_k\|$ cancel out so that, for large $k$,
\BES
\|\phi^{\beta}_k\|^2 = O\left(e^{\frac{\sqrt{3}k\pi}{2}}k^{-1}\right), \qquad \langle \phi^{\beta}_k,\psi^{\beta}_k \rangle = O\left(e^{\frac{\sqrt{3}k\pi}{2}}k^{-1}\right).
\EES
(This cancellation does not occur in the case $\beta=0$.) This causes the projection operators $Q_n$ to be
uniformly bounded in terms of the parameter $\beta$ for $\beta\in[-1+\varepsilon,-\varepsilon]$ for every $0<\varepsilon<1$. However, the bound is not uniform as $\varepsilon\to 0$. This lack of uniformity is reflected in the transition from a regular to a degenerate irregular problem.

\smallskip
It is useful to compare the positioning of the eigenvalues, $\sigma_k^3$.  Asymptotic estimates as well as numerical evidence suggest that for any particular $\beta\in(-1,0)$ the zeros of $\DeltaP$ are distributed approximately at the crosses in Figure~\ref{fig:Eg:1:Basis:Comp:asmpbeta}; the solid rays and line segments represent the asymptotic locations of the zeros; the dashed lines are $\partial D$, the contours of integration in the associated initial-boundary value problem. As $\beta\to0^-$, hence $\log(-\beta)\to-\infty$, the solid rays move further from the origin, leaving the complex plane entirely in the limit, so that the solid line segments emanating from the origin extend to infinity.

\begin{figure}
\begin{center}
\includegraphics{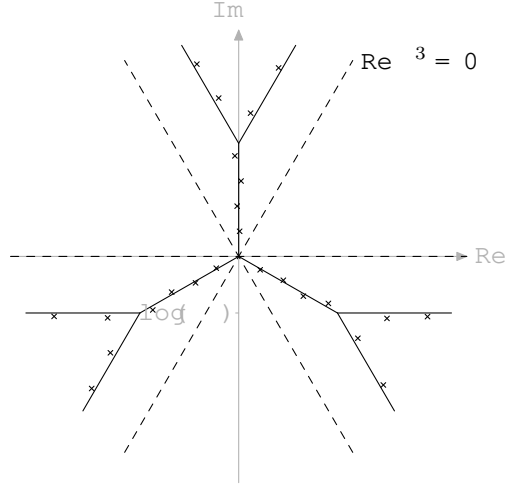}
\caption{The asymptotic position of $\sigma_k$ for $\beta\in(-1,0)$.}
\label{fig:Eg:1:Basis:Comp:asmpbeta}
\end{center}
\end{figure}

\subsection{The PDE theory} \label{sec:Eg:1:IBVP}

We now show, using the Fokas method, that while the initial-boundary value problems is well-posed for any value of $\beta$, the solution admits a series representation only if $\beta\neq 0$, in agreement with the operator theory result of the previous section.

\smallskip
\noindent {\bf The case $\beta\neq 0$}

It is already well known~\citep{FP2005a,Smi2011a} that in this case we have the following result.

\begin{thm} \label{thm:Eg:1:IBVP:Coupled:Series}
The initial-boundary value problem associated with $(S^\beta,i)$ is well-posed and its solution admits a series representation.
\end{thm}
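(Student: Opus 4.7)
The plan is to apply the general Fokas-method results recalled in Section~\ref{sec:Theorems}. By the comment following Definition~\ref{defn:Conditioning} and the results of~\citet{Smi2012a,Smi2013a}, well-posedness of the IBVP associated with $(S^\beta,i)$ is equivalent to well-conditioning of $\Pi$, while the series representation is obtained precisely by the vanishing of the two $\Gamma^\pm$ integrals in~\eqref{pdeintrepr}, which in turn is equivalent to well-conditioning of $\Pi'$. Hence the entire theorem reduces to verifying the decay condition~\eqref{eqn:P1:Intro:thm.WellPosed:Decay} for both $\Pi$ and $\Pi'$, i.e.\ showing that $\zeta^\pm(\rho)/\DeltaP(\rho)\to0$ as $\rho\to\infty$ within every sector exterior to $\Gamma^\pm$, away from the zeros $\sigma_k$.

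The first concrete step is to compute $\zeta^\pm$ explicitly for the boundary coefficient matrix $A^\beta$ using Definitions~\ref{defn:PDE.Char.Det}--\ref{defn:sigmak}, together with the already given formula for $\Delta^\beta(\rho)$. Since the only nonzero boundary data lie in the first column-pair of $A^\beta$ (the coupling of $q_x(0,t)$ and $q_x(1,t)$ via $\beta$), the minors $\det X^{r\,j}(\rho)$ and the factors $\mathcal{A}^\pm_{1\,j}(\rho)$ reduce to manageable exponential polynomials in $\rho$. Combined with the hat transform $\hat{q}_0(\omega^{r-1}\rho)$, one obtains $\zeta^+$ and $\zeta^-$ as sums of terms of the form $p(\rho)\exp(i\omega^{r-1}\rho)\hat{q}_0(\omega^{s-1}\rho)$, with polynomial $p$.

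The second step is the asymptotic analysis. For $n=3$ and $a=\pm i$, the sectors where $\Re(a\rho^n)>0$ are precisely the six $60^\circ$ sectors bounded by the rays $\arg\rho\in\{0,\pi/3,2\pi/3,\dots\}$, three of which $\Gamma^+$ bounds in $\overline{\C^+}$ and three of which $\Gamma^-$ bounds in $\C^-$. In each sector exterior to $\Gamma^\pm$, exactly one exponential $e^{i\omega^{r-1}\rho}$ dominates and another decays; the $\beta$-term $\beta\sum_r\omega^re^{-i\omega^r\rho}$ in $\Delta^\beta$ supplies the dominant exponential growth of $\DeltaP$ in the sector where $\beta=0$ would fail. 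A direct sector-by-sector comparison of the largest exponential in $\zeta^\pm$ against that in $\DeltaP$, using the standard estimate $\hat{q}_0(\omega^{s-1}\rho)=O(|\rho|^{-1})$ uniformly in the relevant sector for $q_0\in AC^n$, gives a polynomial-decay bound for $\zeta^\pm/\DeltaP$ outside $\Gamma^\pm$; the zeros $\sigma_k$ are excluded automatically by restricting to the complement of $\bigcup_kB(\sigma_k,\epsilon)$.

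The third step is to show the residue sum in~\eqref{pdeintrepr} converges to the solution. The asymptotic location of the $\sigma_k$ given in~\eqref{eqn:Eg:1:Basis:Comp:Coupled.Lim:sigmak} places them on a horizontal strip at height $\log(-\beta)$; consequently the $t$-dependent factors $e^{-a\sigma_k^3t}$ are uniformly bounded in $k$ for $t\in[0,T]$, and the simplicity of the zeros (which can be verified from the explicit form of $\Delta^\beta$, again exploiting $\beta\neq0$) allows one to read off the residues. The main obstacle is the sector-by-sector asymptotic bookkeeping: one must track which exponentials dominate in each of the six open sectors exterior to $\Gamma^\pm$, and check that the $\beta$-dependent contribution to $\DeltaP$ is what prevents the pathology that occurs at $\beta=0$ (where the rays of $\sigma_k$ retreat to infinity and the decay fails, as discussed in Section~\ref{sec:Eg:1:Basis}). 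Once this is done, well-conditioning of both $\Pi$ and $\Pi'$ follows, yielding well-posedness and the series representation simultaneously.
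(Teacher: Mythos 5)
Your overall strategy---reduce the theorem to verifying the decay condition \eqref{eqn:P1:Intro:thm.WellPosed:Decay} for both $\Pi$ and $\Pi'$, i.e.\ sector-by-sector asymptotics of $\zeta^\pm/\DeltaP$ constructed from $A^\beta$---is the right framework, and it is essentially what the sources do; note the paper itself gives no proof here, deferring entirely to \citet{FP2005a,Smi2011a}. However, as written your argument has a genuine gap at exactly the decisive step. You propose to control the numerator via ``$\hat q_0(\omega^{s-1}\rho)=O(|\rho|^{-1})$ uniformly in the relevant sector''. That estimate is false where it is needed: since $\hat q_0(\omega^{s-1}\rho)=\int_0^1 e^{-i\omega^{s-1}\rho x}q_0(x)\d x$, integration by parts gives growth of order $e^{\Im(\omega^{s-1}\rho)}/|\rho|$ whenever $\Im(\omega^{s-1}\rho)>0$, and in every sector you must examine, at least one of the three arguments $\omega^{s-1}\rho$ points in such a direction. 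The entire difficulty of this example is that the terms of $\zeta^\pm$ carry exponential growth both from the explicit exponentials in the minors $\det X^{r\hspace{0.5mm}j}$ and from the transforms $\hat q_0(\omega^{r-1}\rho)$, and one must show the worst \emph{combined} growth is still dominated by $\DeltaP$. Your numerator analysis never actually uses $\beta\neq0$; if the comparison worked as you state it, the identical argument would apply at $\beta=0$, contradicting Theorem~\ref{thm:Eg:1:IBVP:Uncoupled:No.Series}, whose proof exhibits blow-up precisely from a product of a growing explicit exponential with a growing $\hat q_0$ factor overtaking $\Delta^0$ in $\widetilde{E}_2$. So the ``bookkeeping'' you defer at the end is not routine housekeeping: it is the substance of the theorem, namely checking that the extra term $-\beta\sum_r\omega^re^{-i\omega^r\rho}$ in $\Delta^\beta$ supplies enough growth in $\widetilde{E}_2$ and $\widetilde{E}_3$ to dominate those mixed products.

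Two smaller points. First, your sector count is off: for $n=3$, $a=i$, the set $\{\Re(a\rho^3)>0\}$ consists of only \emph{three} of the six $60^\circ$ sectors (one meeting $\overline{\C^+}$, two in $\C^-$), the other three having $\Re(a\rho^3)<0$; since well-posedness of $\Pi$ and well-conditioning of $\Pi'$ require decay of different ratios in complementary families of sectors, this bookkeeping must be exact. Second, the paper only asserts that well-conditioning is \emph{necessary} for well-posedness; the equivalence you invoke (well-posedness of $\Pi$ $\Leftrightarrow$ decay conditions) is the characterisation of \citet{Smi2012a} and should be cited as such rather than read off Definition~\ref{defn:Conditioning}. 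With the corrected $\hat q_0$ estimates and the sector analysis actually carried out (or with a citation to \citealp{FP2005a,Smi2011a}, as the paper does), the remainder of your outline---simplicity of the zeros for $\beta\neq0$, boundedness of $e^{-a\sigma_k^3t}$ from the asymptotics \eqref{eqn:Eg:1:Basis:Comp:Coupled.Lim:sigmak}, and the residue computation---is sound.
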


\smallskip
\noindent {\bf The case $\beta= 0$}

\begin{thm} \label{thm:Eg:1:IBVP:Uncoupled:No.Series}
The initial-boundary value problem associated with $(S^0,i)$ is well-posed but the problem $(S^0,-i)$ is ill-conditioned.
\end{thm}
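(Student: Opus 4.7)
The plan is to compute $\DeltaP$ and the spectral functions $\zeta^\pm$ explicitly for the uncoupled boundary conditions and then compare their growth sector by sector, separately for the two direction coefficients $a=i$ and $a=-i$. From~\eqref{eqn:Eg:1:Setup:Uncoupled:Delta} we already know that, up to a simple prefactor, $\DeltaP(\rho)$ coincides with $\rho\sum_{r=0}^2\omega^r e^{i\omega^r\rho}$. The six rays $\arg\rho=k\pi/3$ split the complex plane into sectors of opening $\pi/3$, and in each of them exactly one of the exponentials $e^{i\omega^r\rho}$ dominates (the one minimising $\Im(\omega^r\rho)$), giving an explicit leading-order asymptotic for $\DeltaP$.

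Next I would assemble $\mathcal{A}(\rho)$ from Definition~\ref{defn:PDE.Char.Det}, using the construction of~\citet[Lemma~2.14]{Smi2011a} directly from the boundary conditions of $S^0$ since they are non-Robin, and read off the minors $X^{rj}$ and hence $\zeta^\pm(\rho,q_0)$ as linear combinations of $\hat q_0(\omega^{r-1}\rho)$ with coefficients that are exponential polynomials in $\rho$ carrying the same exponentials $e^{i\omega^r\rho}$ that occur in $\DeltaP$. The Fourier transform $\hat q_0$ satisfies the standard endpoint estimate $|\hat q_0(\rho)|\leq C(1+e^{\Im\rho})/|\rho|$, which combined with the coefficients determines the dominant exponential of $\zeta^\pm$ in each sector.

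For $a=i$ the contours $\Gamma^\pm$ enclose $\{\Re(i\rho^3)>0\}\cap\C^\pm$, i.e.\ $\arg\rho\in(\pi/3,2\pi/3)$ and its $\C^-$ image. In the exterior sectors I would check, sector by sector, that the dominant exponential of $\zeta^\pm$ exactly matches the dominant exponential of $\DeltaP$, so the ratio is $O(1/\rho)$ as $\rho\to\infty$. This establishes well-conditioning in the sense of Definition~\ref{defn:Conditioning}, and uniqueness of the solution then follows by the standard Fokas-method argument, yielding well-posedness of $\Pi(S^0,i)$.

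For $a=-i$ the contours $\Gamma^\pm$ enclose the complementary sectors, so the sector $\arg\rho\in(\pi/3,2\pi/3)$ (together with its $\C^-$ partner) now lies exterior to $\Gamma^\pm$. There the endpoint-$x{=}1$ contribution to $\hat q_0(\omega^{r-1}\rho)$ produces an exponential that matches the \emph{subdominant} rather than the dominant term of $\DeltaP$, so the cancellation that succeeded for $a=i$ fails and $\zeta^\pm/\DeltaP$ does not decay in the exterior sector for generic $q_0$; exhibiting a concrete $q_0$ (for instance a bump supported away from $x=0$) on which the failure is manifest completes the ill-conditioning claim. The main obstacle is the careful bookkeeping of dominant and subdominant exponentials across the six sectors: it is precisely because $S^0$ is degenerate irregular, while $S^\beta$ with $\beta\neq0$ is Birkhoff-regular, that the sectorial cancellation that makes the forward problem well-conditioned breaks down for the reverse direction coefficient.
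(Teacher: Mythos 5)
Your overall strategy (explicit sectorial asymptotics of $\zeta^\pm/\DeltaP$) is the same as the paper's, but as written it has two genuine gaps. First, the well-posedness half: you deduce well-posedness of $\Pi(S^0,i)$ from well-conditioning of $(S^0,i)$ ``plus the standard Fokas-method argument''. But conditioning in the sense of Definition~\ref{defn:Conditioning} is only \emph{necessary} for well-posedness, as the paper states explicitly; well-posedness is governed by the distinct criterion of \citet[Theorem~1.1]{Smi2012a}, which concerns decay of the data-dependent ratios inside the sectors enclosed by the contours (the $\widetilde{E}_j$ here), not the exterior ones. This example is precisely one where naive ``dominant exponentials match'' bookkeeping is treacherous: the ratios $\zeta_2/\DeltaP$ and $\zeta_3/\DeltaP$ blow up inside $\widetilde{E}_2$ and $\widetilde{E}_3$, yet the problem with $a=i$ is well-posed. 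The paper does not reprove this half at all; it cites \citet{Smi2011a}. Your sketch therefore leaves the first assertion of the theorem unproved.

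Second, the ill-conditioning half: your mechanism and your witness are incompatible. Compatibility $(\Pi4)$ forces $q_0(0)=q_0(1)=q_0'(0)=0$, so the leading endpoint contributions to $\hat q_0$ that you invoke vanish, and for a bump supported in the open interval \emph{every} endpoint term of $\hat q_0$ vanishes identically, so the ``endpoint-$x{=}1$ matches the subdominant exponential of $\DeltaP$'' argument is inert on your own test function. What actually fails is a bulk effect, and only for the lower half-plane ratios: $\zeta^+/\DeltaP$ remains bounded in $\widetilde{E}_1$ (Remark~\ref{rmk:Eg:1:IBVP:Uncoupled:No.Series}), while along $\rho_j=R_je^{7i\pi/6}\in\widetilde{E}_2$, with $R_j$ chosen bounded away from the moduli of the zeros of $\DeltaP$ so that the denominator stays away from zero, the numerator of $\zeta_3/\DeltaP$ contains the term
\begin{equation*}
\int_0^1 e^{\frac{R_j}{2}(1-x)}\,(\cdots)\,q_0(x)\,\d x ,
\end{equation*}
which grows like $e^{R_j/2}$ (up to algebraic factors). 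Establishing this still requires a lower bound on an oscillatory integral, which the paper obtains along that ray for \emph{any} nonzero admissible datum --- the statement needed for the theorem, whereas a single concrete $q_0$ would only show ill-conditioning of one particular problem. To close your argument you would need to replace the endpoint heuristic by this explicit ray computation (including the choice of $R_j$ avoiding the zeros of $\DeltaP$) and restrict the failure claim to the correct half-plane.
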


\begin{proof}
The proof of the well-posedness claim in this statement can be found in~\citet{Smi2011a}. However, for this example we now show that the statement
`$\zeta^\pm(\rho)/\DeltaP(\rho) \to0$ as $\rho\to\infty$ from within the sets enclosed by $\Gamma^\pm$'
does not hold, implying that $(S^0,-i)$ is ill-conditioned.

The reduced global relation matrix in this case is given by
\BES
\mathcal{A}(\rho) = \BP c_2(\rho) & c_2(\rho)e^{-i\rho} & c_1(\rho)e^{-i\rho} \\ c_2(\rho) & c_2(\rho)e^{-i\omega\rho} & c_1(\rho)\omega e^{-i\omega\rho} \\ c_2(\rho) & c_2(\rho) e^{-i\omega^2\rho} & c_1(\rho)\omega^2e^{-i\omega^2\rho} \EP,
\EES
hence its determinant $\DeltaP(\rho) =\Delta^{0}(\rho)$ given by (\ref{eqn:Eg:1:Setup:Uncoupled:Delta}), and the functions
\begin{align*}
\zeta_1(\rho) &= i\rho (\omega^2-\omega)\sum_{r=0}^2\omega^r\hat{q}_0(\omega^r\rho)e^{i\omega^r\rho}, \\
\zeta_2(\rho) &= i\rho \sum_{r=0}^2\hat{q}_0(\omega^r\rho)\left(\omega^{r+1}e^{-i\omega^{r+1}\rho} - \omega^{r+2}e^{-i\omega^{r+2}\rho}\right), \\
\zeta_3(\rho) &= i\rho \sum_{r=0}^2\hat{q}_0(\omega^r\rho)\left(e^{-i\omega^{r+2}\rho} - e^{-i\omega^{r+1}\rho}\right), \\
\zeta_4(\rho) &= \zeta_5(\rho) = \zeta_6(\rho) = 0.
\end{align*}
As $a=i$, the regions of interest are
\begin{align*}
\widetilde{E}_j &= E_j \setminus \{ \mbox{neighbourhoods of each } \sigma_k \}, \\
E_j &= \left\{\rho\in\mathbb{C}:\frac{(2j-1)\pi}{3}<\arg(\rho)<\frac{2j\pi}{3}\right\}.
\end{align*}

We consider the particular ratio
\BE \label{eqn:Eg:1:IBVP:Uncoupled:No.Series.Thm:Beta3.DeltaP.1}
\frac{\zeta_3(\rho)}{\DeltaP(\rho)},\quad \rho\in\widetilde{E}_2.
\EE
For $\rho\in\widetilde{E}_2$, $\Re(i\omega^r\rho)<0$ if and only if $r=2$ so we approximate ratio~\eqref{eqn:Eg:1:IBVP:Uncoupled:No.Series.Thm:Beta3.DeltaP.1} by its dominant terms as $\rho\to\infty$ from within $\widetilde{E}_2$,
\BES
\frac{(\hat{q}_0(\rho) - \hat{q}_0(\omega\rho))e^{-i\omega^2\rho} + \hat{q}_0(\omega^2\rho)(e^{-i\omega\rho} - e^{-i\rho}) + o(1)}{(\omega^2-\omega)e^{i\rho}+(1-\omega^2)e^{i\omega\rho} + o(1)}.
\EES
We expand the integrals from $\hat{q}_0$ in the numerator and multiply the numerator and denominator by $e^{-i\omega\rho}$ to obtain
\BE \label{eqn:Eg:1:IBVP:Uncoupled:No.Series.Thm:Beta3.DeltaP.2}
\frac{i\int_0^1{\left( e^{i\rho(1-x)} - e^{i\rho(1-\omega x)} - e^{i\rho\omega^2(1-x)} + e^{-i\rho(2\omega-\omega^2x)} \right)\hat{q}_0(x)}\d x + o\left(e^{\Im(\omega\rho)}\right)}{\sqrt{3}(e^{i\rho(1-\omega)}+\omega) + o\left(e^{\Im(\omega\rho)}\right)}.
\EE

Let $(R_j)_{j\in\mathbb{N}}$ be a strictly increasing sequence of positive real numbers such that $\rho_j=R_je^{i\frac{7\pi}{6}}\in\widetilde{E}_2$, $R_j$ is bounded (uniformly in $j$ and $k$) away from $\{\frac{2\pi}{\sqrt{3}}(k+\frac{1}{6}):k\in\mathbb{N}\}$
and $R_j\to\infty$ as $j\to\infty$. Then $\rho_j\to\infty$ from within $\widetilde{E}_2$. We evaluate ratio~\eqref{eqn:Eg:1:IBVP:Uncoupled:No.Series.Thm:Beta3.DeltaP.2} at $\rho=\rho_j$,
\BE \label{eqn:Eg:1:IBVP:Uncoupled:No.Series.Thm:Beta3.DeltaP.3}
\frac{i\int_0^1{\left( 2ie^{\frac{R_j}{2}(1-x)-\frac{\sqrt{3}R_j}{2}i}\sin\left( \frac{\sqrt{3}R_jx}{2} \right) - e^{-R_j(1-x)}\left( 1-e^{-\sqrt{3}R_ji} \right) \right)\hat{q}_0(x)}\d x + o\left(e^{-\frac{R_j}{2}}\right)}{\sqrt{3}(e^{-\sqrt{3}R_ji}+\omega) + o\left(e^{-\frac{R_j}{2}}\right)}.
\EE
The denominator of ratio~\eqref{eqn:Eg:1:IBVP:Uncoupled:No.Series.Thm:Beta3.DeltaP.3} is bounded away from $0$ by the definition of $R_j$ and the numerator tends to $\infty$ for any nonzero initial datum.
\end{proof}

\begin{rmk} \label{rmk:Eg:1:IBVP:Uncoupled:No.Series}
In the proof of Theorem~\ref{thm:Eg:1:IBVP:Uncoupled:No.Series} we use the example of the ratio $\frac{\zeta_3(\rho)}{\DeltaP(\rho)}$ being unbounded as $\rho\to\infty$ from within $\widetilde{E}_2$. It may be shown using the same argument that $\frac{\zeta_2(\rho)}{\DeltaP(\rho)}$ is unbounded in the same region and that both these ratios are unbounded for $\rho\in\widetilde{E}_3$ using $\rho_j=R_je^{i\frac{11\pi}{6}}$ for appropriate choice of $(R_j)_{j\in\mathbb{N}}$. However the ratio
\BES
\frac{\zeta_1(\rho)}{\DeltaP(\rho)} = \frac{\zeta^+(\rho)}{\DeltaP(\rho)}
\EES
is bounded in $\widetilde{E}_1=\widetilde{E}^+$ hence it is possible to deform the contours of integration in the upper half-plane. This permits a partial series representation of the solution to the initial-boundary value problem.
\end{rmk}

\begin{rmk} \label{rmk:Coupled.Final.Time}
For all $\beta\in(-1,1)$ the final time boundary value problem is ill-posed. The asymptotic location of the zeros of $\DeltaP$, along rays wholly contained within $\{\rho\in\C:\Re(-i\rho^3)<0\}$ means that for nozero initial data the solution exhibits instantaneous blow-up. Nevertheless, for all $\beta\in(-1,0)\cup(0,1)$ the final time problem is well-conditioned. In the case $\beta=0$, the final-time problem becomes ill-conditioned and $S$ becomes degenerate irregular under Locker's classification.

When $\beta=\pm 1$, $S$ is self-adjoint and the initial- and final-boundary value problems are both well-posed. For $|\beta|>1$, the final-boundary value problem remains well-posed but the initial-boundary value problem becomes ill-posed.
Thus the self-adjoint cases represent the transitions between well-posedness of the initial- and final-boundary value problems.
Analogous to the $\beta=0$ case, in the limit $\beta=\infty$, the initial-boundary value problem becomes ill-conditioned, the solution to the final-boundary value problem may not be represented as a series and $S$ becomes degenerate irregular.
\end{rmk}

\subsection{Comparison} \label{ssec:Eg:1:IBVP:Comp}

The explicit computation of the operator norms in section \ref{sec:Eg:1:IBVP} requires the evaluation of the biorthogonal family of eigenfunctions and the precise asymptotics for the corresponding eigenvalues.

On the other hand, the integral representation of the solution of the boundary value problem can be constructed algorithmically from the given data, without the need for any precise asymptotic information about the eigenvalues, except their asymptotic location (always along a ray for odd-order problems;~\citealp[see][Theorem~6.3]{Smi2012a}). This is sufficient for a direct analysis of the terms that blow up and prevent deformation of the contour of integration and a residue computation around the eigenvalues, thereby precluding a series representation of the solution.

In the example above, the particular term in the integral representation exhibiting this blow-up is the term
\BE
\int_0^1 2i\sin\left(\frac{\sqrt{3}R_jx}{2}\right)e^{\frac{R_j}{2}(1-x)-\frac{\sqrt{3}R_j}{2}i}dx \sim \frac 2 {R_j}e^{\frac{R_j}{2}} \ldots,
\label{3bu}
\EE
where the right hand side is obtained by an integration by parts. Note that in particular we can choose $R_j=\frac{4\pi}{\sqrt{3}}\left(k+\frac{1}{6}\right)$.

Comparing this with expression~\eqref{eqn:Eg:1:Basis:Uncoupled:Eigenfunction.Norm.Lem:Equal.Norms}, 
$$
\|\psi_k\|^2 = \|\phi_k\|^2 
= \frac{3\sqrt{3}e^{\frac{4\pi}{\sqrt{3}}\left(k+\frac{1}{6}\right)}}{4\pi\left(k+\frac{1}{6}\right)} + O\left(\frac{e^{\frac{2\pi}{\sqrt{3}}k}}{k}\right) \mbox{ as } k\to\infty.
$$
it is evident that the lack of boundedness of the norms of the operators, responsible for the lack of the properties of a basis for the eigenfunctions biorthogonal family, is exactly the same lack of boundedness in the integrand of the integral representation for the solution of the PDE, yielding a barrier to the contour deformation. Indeed, using the notation of Theorem~\ref{thm:Davies.Norms.2}, we have shown that, for this example,
\begin{gather*}
\|Q_k\| = O\left( \sup_{1\gg|\epsilon|>0}\frac{\zeta_j(\omega\sigma_k+\epsilon,\psi_k)}{\DeltaP(\omega\sigma_k+\epsilon)} \right) \mbox{and} \\
\sup_{1\gg|\epsilon|>0}\frac{\zeta_j(\omega\sigma_k+\epsilon,\psi_k)}{\DeltaP(\omega\sigma_k+\epsilon)} = O\left( \|Q_k\| \right).
\end{gather*}
This is a tighter bound on the blowup of $\|Q_k\|$ than that obtained in section~\ref{sec:Theorems}. No examples have been found that violate the tighter bound but an example is presented in section~\ref{sec:Eg2} for which $\|Q_k\|=O(1)$ while the spectral ratio grows exponentially with $k$.

\section{3\rd order pseudoperiodic examples} \label{sec:Eg2}

In this section we outline the analysis of another class of boundary value problems, depending on a real parameter $\beta$, for the linearized Korteweg-de Vries equation. Namely: 
\begin{align}
q_t&=-q_{xxx},   & &x\in[0,1], \quad t\in[0,T],\label{3betapseudo}\\
q(x,0)&=q_0(x), & &x\in[0,1],\notag \\
q(0,t)&=q(1,t), & &q_x(0,t)=-\beta q_x(1,t), \quad q_{xx}(0,t)=q_{xx}(1,t), \quad t\in[0,T] \quad \beta\in\R\notag
\end{align}
where  $q_0\in \mathcal{D}(S)$ is a known function.

For all $\beta\neq0$, these are pseudoperiodic boundary conditions. In the limit as the constant $\beta\to 2$, the boundary conditions fall into the special class of pseudoperiodic conditions for which the solution cannot be represented as a discrete series~\citep[Section~5]{Smi2012a}. As in Section~\ref{sec:Eg1}, the spectral properties of this limiting case are very different from the case $\beta\neq 2$.

In this section we analyse the behaviour of the associated differential operator in the two cases. To avoid technicalities, and to concentrate on the $\beta=2$ limit, we assume in what follows that $\beta \in(2-\epsilon,2]$.

\subsection*{The associated differential operator}

For the real parameter $\beta\in(2-\epsilon,2]$, we investigate the differential operator $S^{\beta}$ with pseudoperiodic boundary coefficient matrix
\BES
A=\BP1&-1&0&0&0&0\\0&0&1&\beta&0&0\\0&0&0&0&1&-1\EP,
\EES
and the associated initial- and final-boundary value problems $\Pi^{\beta}$ and $\Pi'^{\beta}$.

\begin{rmk}
The restriction from $\beta\in\R\setminus\{-1,0,1/2\}$ to $\beta\in(2-\epsilon,2]$ is not of any consequence other than notational convenience but the cases $\beta=-1$, $\beta=0$ and $\beta=1/2$ require special treatment.

Indeed, $\beta=1/2$ is equivalent to the final-boundary value problem $\Pi'$ being well-posed but with solution lacking a series representation (as $\Pi$ is ill-conditioned) and, as $S^{1/\beta}$ is the adjoint of $S^{\beta}$ for any $\beta\neq0$, the below analysis carries over to this case with a relabeling between $S$ and $S^\star$. 

If $\beta=0$ then the boundary conditions are no longer pseudo-periodic. A description of well-posedness for this case is given in~\citet{Smi2013a}.

If $\beta=-1$ then the operator is periodic hence, from the classical theory, its eigenfunctions form a basis and the problems $\Pi$ and $\Pi'$ are both well-posed.
\end{rmk}

\subsection{The spectral theory} \label{ssec:Pseudo.Spectral}

In this section we attempt to use operator theoretic results to investigate whether the eigenfunctions of $S^{\beta}$ form a basis. 

\smallskip
\noindent {\bf The case $\beta<2$}

It is shown by~\citet{Smi2011a} that this differential operator is regular, hence by the theory of~\citet{Loc2000a} we conclude that the eigenfunctions form a complete system in $\overline{\mathcal{D}}(S)$.

\smallskip
\noindent {\bf The case $\beta=2$}

Since this differential operator is degenerate irregular, Locker's theory does not apply. However, in this example we are \emph{unable} to apply Davies' method to discern whether the eigenfunctions form a basis. The eigenfunctions form a \emph{tame}~\citep[in the sense of][]{Dav2000a} system, which is a necessary but not sufficient condition for a basis.

Indeed, following the same outline method as in Section~\ref{sec:Eg:1:Basis}, we obtain

\begin{itemize}
\item
The eigenvalues of $S^{2}$ are the cubes of the nonzero zeros of the exponential polynomial
\BE \label{eqn:Eg:2:Basis:Uncoupled:Lem.Eigenvalues:NewDelta}
e^{-i\rho} + e^{-i\omega\rho} +  e^{-i\omega^2\rho} - 3.
\EE
The nonzero zeros of expression~\eqref{eqn:Eg:2:Basis:Uncoupled:Lem.Eigenvalues:NewDelta} may be expressed as complex numbers $\sigma_k,$ $\omega\sigma_k,$ $\omega^2\sigma_k$ for each $k\in\mathbb{N}$, where $\Re(\sigma_k)=0$ and $\Im(\sigma_k)<0$. Then $\sigma_k$ is given asymptotically by
\BE \label{eqn:Eg:2:Basis:Uncoupled:Lem.Eigenvalues:Asymptotic.Eigenvalues}
i\sigma_k = \frac{2\pi}{\sqrt{3}}\left(k-\frac{1}{2}\right) + O\left( e^{-k\pi\sqrt{3}/3} \right) \mbox{ as } k\to\infty.
\EE
\item
Let
\BE \label{eqn:Eg:2:Basis:Uncoupled:Lem.Eigenfunctions:Eigenfunctions}
\phi_k(x) = \sum_{r=0}^2\omega^{r}e^{i\omega^r\sigma_kx}\left(e^{-i\omega^{r}\sigma_k}-e^{i\omega^{r+2}\sigma_k}-e^{i\omega^{r+1}\sigma_k}+1\right),\quad k\in\mathbb{N}.
\EE
Then, for each $k\in\mathbb{N}$, $\phi_k$ is an eigenfunction of $S^{2}$ with eigenvalue $\sigma_k^3$.
\item
The adjoint operator $(S^{2})^\star$ has  eigenvalues  $\{-\sigma_k^3:k\in\mathbb{N}\}$, corresponding to eigenfunctions
\BE \label{lem:Eg:2:Basis:Uncoupled:Adjoint:Eigenfunctions}
\psi_k(x) = \sum_{r=0}^2\omega^{r}e^{-i\omega^r\sigma_kx}\left(e^{i\omega^{r}\sigma_k}-e^{-i\omega^{r+2}\sigma_k}-e^{-i\omega^{r+1}\sigma_k}+1\right),\quad k\in\mathbb{N}.
\EE
and there are at most finitely many eigenfunctions of $(S^{(2)})^\star$ that are not in the set $\{\psi_k:k\in\mathbb{N}\}$.
\item
Let
\BE
\Psi_k(x) = \frac{\psi_k(x)}{\langle\psi_k,\phi_k\rangle}.
\EE
Then there exists a minimal $Y\in\mathbb{N}$ such that $((\phi_k)_{k=Y}^\infty,(\Psi_k)_{k=Y}^\infty)$ is a biorthogonal sequence in $AC^n[0,1]$. Moreover
\begin{equation} \label{eqn:Eg:2:Basis:Uncoupled:Biorth.Lem:Norm}
\langle\psi_k,\phi_k\rangle 
= \frac{\sqrt{3}}{\left(k-\frac{1}{2}\right)\pi}e^{\frac{4\pi}{\sqrt{3}}\left(k-\frac{1}{2}\right)} + O(e^{\sqrt{3}\pi k}k^{-1}) \mbox{ as } k\to\infty.
\end{equation}
\item
The eigenfunctions have the same norm and it grows \emph{at the same rate} as their inner product.
\begin{equation} \label{eqn:Eg:2:Basis:Uncoupled:Eigenfunction.Norm.Lem:Equal.Norms}
\|\psi_k\|^2 = \|\phi_k\|^2 
= \frac{3\sqrt{3}}{2\left(k-\frac{1}{2}\right)\pi}e^{\frac{4\pi}{\sqrt{3}}\left(k-\frac{1}{2}\right)} + O(e^{\sqrt{3}\pi k}k^{-1}) \mbox{ as } k\to\infty.
\end{equation}
\item
Then the projection $Q_k$ has norm $\|\phi_k\|\|\Psi_k\|$, which is bounded uniformly in $k$. From this result, it is impossible to determine whether the eigenfunctons form a basis or not.
\end{itemize}

\subsection{The PDE theory}
As shown in~\citet[Example~5.2]{Smi2012a}, $\Pi$ is ill-posed if and only if $\beta=2$. Via Proposition~\ref{prop:basis.then.both.wellposed}, this yields the result that the analysis of section~\ref{ssec:Pseudo.Spectral} could not---the eigenfunctions do not form a basis.

\begin{prop} \label{prop:Pseudo.2.Blowup}
Let $R_k=4k\pi/\sqrt{3}$ and let $\rho_k=R_ke^{i\pi/6}$. Then, using the notation of~\citet{Smi2012a}, the ratio
\BES
\frac{\eta_2^{(2)}(\rho_k)}{\DeltaPsup{(2)}(\rho_k)} = \frac{(-1)^k(q_T(0)-2q_T(1))e^{R_k/2}}{6R_k^2} + O(e^{R_k/2}R_k^{-3}), \mbox{ as } k\to\infty.
\EES
\end{prop}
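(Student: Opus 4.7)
The plan is to obtain, via the construction of~\citet{Smi2012a} specialised to the pseudoperiodic matrix at $\beta=2$, explicit exponential-polynomial expressions for $\DeltaPsup{(2)}(\rho)$ and $\eta_2^{(2)}(\rho)$, substitute $\rho=\rho_k$, and extract the leading asymptotics.

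First, I would verify that the denominator reduces (up to a nonzero multiplicative constant $c_0$) to $c_0\rho^2$ times the exponential polynomial in~\eqref{eqn:Eg:2:Basis:Uncoupled:Lem.Eigenvalues:NewDelta}. Along the ray $\arg\rho=\pi/6$, a direct calculation gives $-i\rho=\tfrac{R}{2}-i\tfrac{R\sqrt{3}}{2}$, $-i\omega\rho=\tfrac{R}{2}+i\tfrac{R\sqrt{3}}{2}$, and $-i\omega^2\rho=-R$ where $R=|\rho|$. The choice $R_k=4k\pi/\sqrt{3}$ makes $R_k\sqrt{3}/2=2k\pi$, so the oscillatory phases trivialise: $e^{-i\rho_k}=e^{-i\omega\rho_k}=e^{R_k/2}$ and $e^{-i\omega^2\rho_k}=e^{-R_k}$. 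Substituting,
\begin{equation*}
\DeltaPsup{(2)}(\rho_k) = c_0\rho_k^2\bigl(2e^{R_k/2}-3+O(e^{-R_k})\bigr) \sim 2c_0\rho_k^2 e^{R_k/2}.
\end{equation*}

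Second, I would expand $\eta_2^{(2)}(\rho_k)$ in the same way. Because $\eta_2^{(2)}$ carries the terminal-data transform $\hat q_T$, I would integrate by parts once to isolate boundary contributions $q_T(0)$ and $q_T(1)$, paying one factor of $1/\rho$; this produces the extra $1/R_k$ beyond the $1/R_k^2$ already present from the $\rho_k^2$ in the denominator. The $\beta=2$ structure of the characteristic matrix forces the coefficients of the $q_T(0)$-terms and the $q_T(1)$-terms to combine in the ratio $1:-2$, producing the factor $q_T(0)-2q_T(1)$. The alternating sign $(-1)^k$ then emerges from a half-angle rewriting of combinations such as $e^{-i\omega^{r-1}\rho}-1$, via the identity $\sin(\omega^{r-1}\rho_k/2)=i(-1)^k\sinh(R_k/4)$ at our special $\rho_k$; equivalently it is the manifestation of $\cos(R_k\sqrt{3}/4)=\cos(k\pi)=(-1)^k$, which is the only source of oscillation surviving the choice $R_k\sqrt{3}/2=2k\pi$.

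Third, forming the ratio, the dominant $e^{R_k/2}$ cancels pairwise between numerator and denominator; the remaining $\rho_k^2\sim R_k^2$ passes to the denominator; and the numerical constants combine to give $1/6$. The subleading contributions from the integration-by-parts expansion and from the $e^{-R_k}$ terms in the denominator yield the error $O(e^{R_k/2}R_k^{-3})$. The main obstacle is the algebraic bookkeeping required to verify both the precise coefficient ratio $1:-2$ of the boundary terms and the exact numerical factor $1/6$; these rest on the specific cancellations that occur only at $\beta=2$ and mirror the degenerate irregularity observed in Section~\ref{ssec:Pseudo.Spectral}.
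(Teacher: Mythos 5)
Your overall route is the same as the paper's (write out $\DeltaPsup{(2)}$ and $\eta_2^{(2)}$ explicitly, substitute $\rho_k$, keep the dominant exponentials, integrate $\hat q_T$ by parts), but two of your concrete steps would fail. First, the degree count: since the three boundary conditions have orders $0,1,2$, the determinant of the characteristic matrix carries the polynomial factor $\rho^{0+1+2}=\rho^3$, and indeed the paper obtains $\DeltaPsup{(2)}(\rho_k)=3i\sqrt3\,R_k^3\bigl[3-\sum_{j=0}^2 e^{-i\omega^j\rho_k}\bigr]$, not $c_0\rho^2$ times the exponential polynomial~\eqref{eqn:Eg:2:Basis:Uncoupled:Lem.Eigenvalues:NewDelta}; correspondingly $\eta_2^{(2)}$ carries a prefactor $R_k^2$, which you never pin down. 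With your $\rho^2$ denominator and ``one factor of $1/\rho$ from integration by parts beyond the $1/R_k^2$ already present'', the power bookkeeping gives $R_k^{-3}$ (or something else, depending on the unstated numerator prefactor), so neither the stated power $R_k^{-2}$ nor the constant $1/6$ can actually be extracted from your plan. Likewise the $1:-2$ weighting of $q_T(1)$ against $q_T(0)$ is asserted rather than derived, and it is misattributed: $\eta_2^{(\beta)}$ is independent of $\beta$, so it is not ``forced by the $\beta=2$ structure''; it comes from the endpoint asymptotics of the three transforms $\hat q_T(\omega^j\rho_k)$ paired with the growing exponentials, two of which contribute at $x=1$ and one at $x=0$. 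The role of $\beta=2$ is only to degenerate the \emph{denominator}, removing the $(\beta-2)e^{i\omega^2\rho_k}\sim(\beta-2)e^{R_k}$ term that for $\beta\neq2$ dominates and keeps the ratio bounded.

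Second, your mechanism for the sign $(-1)^k$ does not work. At $\rho_k=R_ke^{i\pi/6}$ with $\sqrt3R_k/2=2k\pi$ one has $e^{-i\rho_k}=e^{-i\omega\rho_k}=e^{R_k/2}$ and $e^{-i\omega^2\rho_k}=e^{-R_k}$, all positive reals; $\DeltaPsup{(2)}$, $\eta_2^{(2)}$ and the endpoint terms of $\hat q_T(\omega^j\rho_k)$ are built only from these exponentials and fixed-phase powers of $\rho_k$, so no alternating sign can be generated by your half-angle rewriting. Concretely, in $e^{-i\omega^{r-1}\rho}-1=-2ie^{-i\omega^{r-1}\rho/2}\sin(\omega^{r-1}\rho/2)$ the factor $\sin(\omega^{r-1}\rho_k/2)=i(-1)^k\sinh(R_k/4)$ is accompanied by $e^{-i\omega^{r-1}\rho_k/2}=(-1)^ke^{R_k/4}$, and the two signs cancel identically---as they must, since $e^{-i\omega^{r-1}\rho_k}-1=e^{R_k/2}-1>0$. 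In the paper's proof the alternating sign is carried by the full-angle phases $e^{\pm iR_k\sqrt3/2}$ being read as $(-1)^k$ (i.e.\ effectively $\sqrt3R_k/2=k\pi$); whichever normalisation of $R_k$ one adopts, your proposed ``only surviving source of oscillation'' contributes nothing, so this step, and with it the leading coefficient, would have to be redone via the paper's dominant-exponential and endpoint analysis.
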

\begin{proof}
A quick calculation yields
\BE
\DeltaPsup{(\beta)}(\rho_k) = i\sqrt{3}R_k^3 \left[ 3\beta+3 + (\beta-2)\sum_{j=0}^2e^{i\omega^j\rho_k} + (1-2\beta)\sum_{j=0}^2e^{-i\omega^j\rho_k} \right],
\EE
hence
\BE
\DeltaPsup{(2)}(\rho_k) = i3\sqrt{3}R_k^3 \left[ 3 - \sum_{j=0}^2e^{-i\omega^j\rho_k} \right].
\EE
The spectral function
\BES
\eta_2^{(\beta)}(\rho_k) = i\sqrt{3}\omega^2R_k^2 \sum_{j=0}^2\omega^{2j}\hat{q}_T(\omega^j\rho_k) \left( e^{i\omega^j\rho_k} - e^{-i\omega^{j+1}\rho_k} - e^{-i\omega^{j+2}\rho_k} + 1 \right)
\EES
is independent of $\beta$.

By the definition of $\rho_k$, the functions
\BES
e^{i\omega^2\rho_k}=e^{R_k},\quad e^{-i\rho_k}=e^{R_k/2}e^{iR_k\sqrt{3}/2} \quad \mbox{and} \quad e^{-i\omega\rho_k}=e^{R_k/2}e^{-iR_k\sqrt{3}/2}
\EES
grow exponentially with $k$, while
\BES
e^{-i\omega^2\rho_k}=e^{-R_k},\quad e^{i\rho_k}=e^{-R_k/2}e^{-iR_k\sqrt{3}/2} \quad \mbox{and} \quad e^{i\omega\rho_k}=e^{-R_k/2}e^{iR_k\sqrt{3}/2}
\EES
decay. Hence
\BES
\DeltaPsup{(2)}(\rho_k) = (-1)^{k+1}i6\sqrt{3}R_k^3 e^{R_k/2} + O(e^{-R_k}R_k^3), \mbox{ as } k\to\infty.
\EES
Also
\begin{align*}
\eta_2^{(2)}(\rho_k) &= -i\omega^2\sqrt{3}R_k^2 \left( - q_T(\rho_k)e^{-i\omega\rho_k} - \omega^2q_T(\omega\rho_k)e^{-i\rho_k} + \omega q_T(\omega^2\rho_k)e^{i\omega^2\rho_k} \right) \\ &\hspace{90mm}+ O(e^{R_k/2}R_k^2) \\
&= -\omega^2i\sqrt{3}R_k^2 \left( - \int_0^1q_T(x)e^{\frac{R_k}{2}[x+1 + i\sqrt{3}(x-1)]}\d x \right. \\ &\hspace{18mm} \left.- \omega^2\int_0^1q_T(x)e^{\frac{R_k}{2}[x+1 - i\sqrt{3}(x-1)]}\d x+ \omega\int_0^1q_T(x)e^{R_k[1-x]}\d x \right) \\ &\hspace{90mm} + O(e^{R_k/2}R_k^2) \\
&= -\omega^2i\sqrt{3}R_k^2 \left( - \frac{2q_T(1)e^{R_k}}{R_k(1+i\sqrt{3})} - \omega^2\frac{2q_T(1)e^{R_k}}{R_k(1-i\sqrt{3})} + \omega\frac{q_T(0)e^{R_k}}{R_k}\right.  \\ &\hspace{67mm} \left.+ O(e^{R_k}R_k^{-2}) \right) + O(e^{R_k/2}R_k^2) \\
&= -i\sqrt{3}R_k \left( 2q_T(1) - q_T(0) \right) e^{R_k} + O(e^{R_k}).
\end{align*}
Note that $q_T(1) = q(1,T) = -q(0,T) = -q_T(0)$, by the first boundary condition. Hence, provided we can be sure that $q_T(0)\neq0$, $2q_T(1) - q_T(0)\neq0$.
\end{proof}

As $0<\arg(\rho_k)<\pi/3$, and $R_k$ was chosen to ensure that $\DeltaP(\rho_k)$ is bounded away from $0$, $\rho_k\in\widetilde{D}_1$. Hence, by~\citet[Theorem~1.1]{Smi2012a}, $\Pi$ is ill-posed.

\smallskip

The rate of blowup exhibited in Proposition~\ref{prop:Pseudo.2.Blowup} is maximal in the sense that for any sequence $(\rho_k)_{k\in\N}$ such that $|\sigma_{k-1}|<|\rho_k|<|\sigma_k|$ and for any $j\in\{1,2,3\}$,
\BES
\frac{\eta_j^{(2)}(\rho_k)}{\DeltaPsup{(2)}(\rho_k)} = O(e^{R_k/2}R_k^{-2}).
\EES

The problem $\Pi'$ is well-conditioned for all $\beta\in(2-\epsilon,2]$. Indeed, for any sequence $(\rho_k)_{k\in\N}$ with $\rho_k\in \widetilde{D}_r$ and $\rho_k\to\infty$, we find the asymptotic behaviour:
\BES
\frac{\eta_j^{(\beta)}(\rho_k)}{\DeltaPsup{(\beta)}(\rho_k)} = O(|\rho_k|^{-1}).
\EES

\subsection{Comparison}
In order to find the asymptotic behaviour of $\|Q_k\|$, the complex calculation outlined in section~\ref{ssec:Pseudo.Spectral} is necessary. However, the result we obtain is that the projection operators are uniformly bounded in norm, from which we cannot discern whether the eigenfunctions form a basis.

The calculation required to prove Proposition~\ref{prop:Pseudo.2.Blowup} is relatively simple and from that result, via Proposition~\ref{prop:basis.then.both.wellposed}, it follows that the eigenfunctions are not a basis.

\section*{Conclusions}

In this paper, we have gathered and summarised old and new results on a newly analysed correspondence between the spectral theory of linear differential operators with constant coefficients and the analysis and solution of IBVPs for linear constant coefficient evolution PDEs.  
We also presented two specific examples to illustrate the power of this connection for inferring results on the spectral structure of the operator.

In Section~\ref{sec:Theorems}, we developed a new method for showing that the eigenfunctions of certain differential operators do not form a basis. This method relies crucially upon finding a well-posed IBVP whose solution cannot be represented as a series.

In Sections~\ref{sec:Eg1}--\ref{sec:Eg2}, we compare the new method to the established method of Davies by applying each method to examples. The calculations we present  suggest that the PDE approach is more straightforward in deriving estimates for the boundedness of projector operators, hence results on the existence of eigenfunction bases. Indeed, it is sufficient to estimate the boundedness of functions constructed algorithmically in certain well defined complex directions. 

The second example represents a case where the new method yields a result but the operator-theoretic methods we considered do not. Indeed we show that the solution of the only well-posed initial-boundary value problem cannot be represented as a discrete series hence, by Proposition~\ref{prop:basis.then.both.wellposed}, the eigenfunctions cannot form a basis. But the eigenfunctions are not wild, indeed the associated projection operators are uniformly bounded in norm, so we cannot reach the same conclusion using e.g.\ the operator-theoretic framework of Davies.

The remainder of Section~\ref{sec:Theorems} investigates the relation between the two methods. Indeed, for the class of operators we discuss, determining the wildness of the eigenfunctions is equivalent to the calculation of precisely the same quantities used to determine well-posedness of the associated initial-boundary value problems.

It is expected that the well-posedness of both the initial- and final-boundary value problems is sufficient to guarantee that the projection operators are uniformly bounded in norm. 

The applicability of the new method has only been shown for eigenfunctions of the class of differential operators considered herein, whereas Davies' method could be applied to any complete biorthogonal system, whether it is constructed from the eigenfunctions of differential operator or not. However, it should be possible to extend the new method, along with the results of~\citet{Smi2012a} to a wider class of differential operators, providing a powerful tool to investigate the spectral properties of linear differential operators. For example, throughout this work we have assumed that $S=(-i\partial_x)^n$. A general constant-coefficient linear differential operator may have more terms, but its principal part could always be represented by such an operator $S$. As the spectral behaviour of the operator is governed by its principal part, we expect  the above results to carry over to such operators.

\section*{Acknowledgements}
We are very grateful to
E B Davies, M Marletta and the reviewers
for their useful suggestions. 

The research leading to these results has received funding from EPSRC and the European Union's Seventh Framework Programme (FP7-REGPOT-2009-1) under grant agreement n$^\circ$ 245749.

\appendix

\section{Appendix} \label{app:Symmetry.Condition}
The statements of results~\ref{prop:non-Robin.symmetry.then.ef}--\ref{thm:Davies.Norms.2} all require the following additional conditions:

\begin{cond} \label{cond:non-Robin}
The boundary coefficient matrix $A$ is \emph{non-Robin}: \\
None of the boundary conditions represent couplings between different orders of boundary function. \\ That is, for each $k\in\{1,2,\dots,n\}$, if $\M{\alpha}{k}{j}\neq0$ or $\M{\beta}{k}{j}\neq0$ then $\M{\alpha}{k}{r}=0=\M{\beta}{k}{r}$ for all $r\neq j$.
\end{cond}
Note the following contrast with Robin's original definition. Our Robin/non-Robin classification is independent of coupling between the two ends of the interval; the boundary condition $q_x(0,t)=q(1,t)$ is of Robin type and couples the ends of the interval.

\begin{cond} \label{cond:Symmetry}
Recall that $A$ is reduced row-echelon form. The boundary conditions are such that if the boundary function of order $r$ at one end corresponds to a pivoting entry in the boundary coefficient matrix $A$ then the boundary function of order $n-1-r$ at the other end must correspond to a non-pivoting entry in $A$. Further, the coupling constants for coupled boundary conditions of order $r$ and $n-1-r$ are equal.
\end{cond}
For simple boundary conditions, this means that if the boundary function of order $r$ at one end is specified then the boundary function of order $n-1-r$ at the other end must not be specified.

\bibliographystyle{plainnat}
\bibliography{dbrefs}

\end{document}